\documentclass[12pt, reqno]{amsart}
\usepackage{amsmath, amsthm, amscd, amsfonts, amssymb, graphicx, color}
\usepackage[bookmarksnumbered, colorlinks, plainpages]{hyperref}

\input{mathrsfs.sty}

\textheight 22.5truecm \textwidth 15 truecm
\setlength{\oddsidemargin}{0.35in}\setlength{\evensidemargin}{0.35in}

\setlength{\topmargin}{-.5cm}

\newtheorem{theorem}{Theorem}[section]
\newtheorem{lemma}[theorem]{Lemma}
\newtheorem{proposition}[theorem]{Proposition}
\newtheorem{corollary}[theorem]{Corollary}
\theoremstyle{definition}

\theoremstyle{remark}

\numberwithin{equation}{section}
\begin{document}

\title [Some operator Bellman type inequalities ]{Some  operator Bellman type inequalities }

\author[M. Bakherad, A. Morassaei]{Mojtaba Bakherad$^1$ and Ali Morassaei$^2$}

\address{$^1$Department of Mathematics, Faculty of Mathematics, University of Sistan and Baluchestan, Zahedan, Iran.}
\email{mojtaba.bakherad@yahoo.com; bakherad@member.ams.org}
\address{$^2$Department of Mathematics, Faculty of Sciences, University of Zanjan, University Blvd., Zanjan 45371-38791, Iran}
\email{morassaei@znu.ac.ir, morassaei@chmail.ir}
\subjclass[2010]{Primary 47A63, Secondary 46L05, 47A60.}

\keywords{Bellman inequality; operator mean; the Mond--Pe\v{c}ari\'c method; positive linear map.}
\begin{abstract}
In this paper, we employ the Mond--Pe\v{c}ari\'c method to establish some reverses of the operator Bellman inequality under certain conditions. In particular, we show
\begin{equation*}
\delta I_{\mathscr K}+\sum_{j=1}^n\omega_j\Phi_j\left((I_{\mathscr H}-A_j)^{p}\right)\ge \left(\sum_{j=1}^n\omega_j\Phi_j(I_{\mathscr H}-A_j)\right)^{p} \,,
\end{equation*}
where $A_j\,\,(1\leq j\leq n)$ are self-adjoint  contraction operators with $0\leq mI_{\mathscr H}\le A_j \le MI_{\mathscr H}$,  $\Phi_j$ are unital positive linear maps on ${\mathbb B}({\mathscr H})$, $\omega_j\in\mathbb R_+ \,\,(1\leq j\leq n)$, $0 < p < 1$ and
 $\delta=(1-p)\left(\frac{1}{p}\frac{(1-m)^p-(1-M)^p}{M-m}\right)^{\frac{p}{p-1}}+\frac{(1-M)(1-m)^p-(1-m)(1-M)^p}{M-m}$ .
 We also present some refinements of the operator Bellman inequality. \end{abstract} \maketitle

\section{Introduction}
Let ${\mathbb B}({\mathscr H})$ denote the $C^*$-algebra of all
bounded linear operators on a complex Hilbert space ${\mathscr H}$ with the identity $I_{\mathscr H}$. In the case when ${\rm dim}{\mathscr{H}}=n$, we identify $\mathbb{B}(\mathscr{H})$ with the matrix algebra $\mathcal{M}_n(\mathbb{C})$ of all
$n\times n$ matrices with entries in the complex field.
An operator $A\in{\mathbb B}({\mathscr H})$ is called positive
if $\langle Ax,x\rangle\geq0$ for all $x\in{\mathscr H }$  and in this case we write $A\geq0$. We write $A>0$ if $A$ is
a positive invertible operator. The set of all positive invertible operators  is denoted by ${\mathbb B}({\mathscr H})_+$. For
self-adjoint operators $A, B\in{\mathbb B}({\mathscr H})$, we say
$A \leq B$ if $B-A\geq0$. Also, an operator $A\in {\mathbb B}({\mathscr H})$ is said to be contraction, if $A^*A \leq I_{\mathscr H}$. The Gelfand map $f(t)\mapsto f(A)$ is an
isometrical $*$-isomorphism between the $C^*$-algebra
$C({\rm sp}(A))$ of continuous functions on the spectrum ${\rm sp}(A)$
of a self-adjoint operator $A$ and the $C^*$-algebra generated by $A$ and $I_{\mathscr H}$. If $f, g\in C({\rm sp}(A))$, then
$f(t)\geq g(t)\,\,(t\in{\rm sp}(A))$ implies that $f(A)\geq g(A)$.

Let $f$ be a continuous real valued function defined on an interval $J$. It is called operator monotone if
$A\leq B$ implies $f(A)\leq f(B)$ for all self-adjoint operators $A, B\in {\mathbb B}({\mathscr H})$ with spectra in $J$; see \cite{FKN}  and references therein for some recent results. It said to be operator concave if $\lambda f(A)+(1-\lambda)f(B)\leq f(\lambda A+(1-\lambda)B)$ for all self-adjoint operators $A, B\in {\mathbb B}({\mathscr H})$ with spectra in $J$ and all $\lambda\in [0,1]$. Every nonnegative continuous function $f$ is operator
monotone on $[0,+\infty)$ if and only if $f$ is operator concave on $[0,+\infty)$; see \cite[Theorem 8.1]{abc}.
A map $\Phi:\mathbb{B}(\mathscr{H}) \longrightarrow\mathbb{B}(\mathscr{K})$ is called
positive if $\Phi(A)\geq0$ whenever $A\geq0$, where $\mathscr K$ is a complex Hilbert space and is said to be unital if $\Phi(I_\mathscr{H})=I_\mathscr{K}$.
We denote by $\mathbf{P}[\mathbb{B}(\mathscr{H}),\mathbb{B}(\mathscr{K})]$ the set of all positive linear maps $\Phi:\mathbb{B}(\mathscr{H})\to\mathbb{B}(\mathscr{K})$ and by $\mathbf{P}_N[\mathbb{B}(\mathscr{H}),\mathbb{B}(\mathscr{K})]$ the set of all unital positive linear maps.

The axiomatic theory for operator means of positive invertible operators have been developed by Kubo and Ando \cite{ando}. A binary operation $\sigma$ on ${\mathbb B}({\mathscr H})_+$ is called a connection, if the following conditions are satisfied:
\begin{itemize}
\item [(i)] $A\leq C$ and $B\leq D $ imply
$A\sigma B\leq C\sigma D$;
\item [(ii)] $A_n\downarrow A$ and $ B_n\downarrow B$ imply
$A_n\sigma B_n\downarrow A\sigma B$, where $A_n\downarrow A$ means that $A_1\geq A_2\geq \cdots$ and $A_n\rightarrow A$ as $n\rightarrow\infty$ in the strong operator topology;
\item [(iii)] $T^*(A\sigma B)T\leq (T^*AT)\sigma (T^*BT)\,\,(T\in{\mathbb B}({\mathscr H}))$.
\end{itemize}
There exists an affine order isomorphism between the class of connections and the class of positive operator monotone functions $f$ defined on $(0,\infty)$  via
$f(t)I_{\mathscr H}=I_{\mathscr H}\sigma_f(tI_{\mathscr H})\hspace{.1cm}(t>0)$. In addition, $A\sigma_f B=A^{1\over 2}f(A^{-\frac{1}{2}}BA^{-\frac{1}{2}})A^{1\over2}$ for all $A, B\in{\mathbb B}({\mathscr H})_+$. The operator monotone function $f$ is called the representing function of $\sigma_f$.  A connection $\sigma_f$ is a mean if it is normalized, i.e. $I_{\mathscr H}\sigma_f I_{\mathscr H}=I_{\mathscr H}.$ The function $f_{\nabla_\mu}(t)=(1-\mu)+\mu t$ and $f_{\sharp_\mu}(t)=t^\mu$ on $(0,\infty)$ for $\mu\in(0,1)$ give the operator weighted arithmetic mean $A\nabla_\mu B=(1-\mu)A+\mu B$ and the operator weighted geometric mean $A\sharp_\mu
B=A^{\frac{1}{2}}\left(A^{-\frac{1}{2}}BA^{-\frac{1}{2}}\right)^{\mu}A^{\frac{1}{2}}$, respectively. The case $\mu=1/2$, the operator weighted geometric mean gives rise to the so--called geometric mean $A\sharp B$.

Bellman \cite{bell} proved that if $p$ is a positive integer and  $a, b, a_j, b_j\,\,(1\leq j\leq n)$  are positive real numbers  such that $\sum_{j=1}^na_j^p \leq a^p$ and $\sum_{j=1}^nb_j^p \leq b^p$, then
\begin{align*}
\left(a^p-\sum_{j=1}^na_j^p\right)^{1/p}+ \left(b^p-\sum_{j=1}^nb_j^p\right)^{1/p} \leq \left((a+b)^p-\sum_{j=1}^n(a_j+b_j)^p\right)^{1/p}\,.
\end{align*}
A "multiplicative" analogue of this inequality is due to J. Acz\'{e}l. In 1956, Acz\'{e}l \cite{Ac} proved that if $a_j, b_j~ (1\le j\le n)$ are positive real numbers such that $a_1^2-\sum_{j=2}^na_j^2>0$ or $b_1^2-\sum_{j=2}^nb_j^2>0$, then
$$
\left(a_1^2-\sum_{j=2}^na_j^2\right)\left(b_1^2-\sum_{j=2}^nb_j^2\right)\leq \left(a_1b_1-\sum_{j=2}^na_jb_j\right)^2\,.
$$
Popoviciu \cite{Po} extended Acz\'{e}l's inequality by showing that
$$
\left(a_1^p-\sum_{j=2}^na_j^p\right)\left(b_1^p-\sum_{j=2}^nb_j^p\right)\leq \left(a_1b_1-\sum_{j=2}^na_jb_j\right)^p\,,
$$
where $p\ge 1$ and $a_1^p-\sum_{j=2}^na_j^p>0$ or $b_1^p-\sum_{j=2}^nb_j^p>0$.

During the last decades several generalizations, refinements and applications of the Bellman inequality in various settings have been given and some results related to integral inequalities are presented; see \cite{pop, MMM1, MMM, Moslehian, mor5} and references therein.

In \cite{MMM} the authors showed an operator Bellman inequality as follows:
\begin{align*}
\Phi\left((I_\mathscr{H}-A)^{p}\nabla_\lambda(I_\mathscr{H}-B)^{p}\right) \leq \left(\Phi(I_\mathscr{H}-A\nabla_\lambda B)\right)^{p}\,,
\end{align*}
whenever $A, B$ are positive contraction operators, $\Phi$ is a unital positive linear map on $\mathbb{B}(\mathscr{H})$ and $0 < p < 1$.
They also \cite{MMM1} showed the following generalization of the Bellman operator inequality
\begin{align}\label{mor2}
\left(I_\mathscr{H}-\sum_{j=1}^nA_j\right)\sigma_{f^p}\left(I_\mathscr{H}-\sum_{j=1}^nB_j\right)
\leq\left(I_\mathscr{H}-\left(\sum_{j=1}^nA_j\sigma_fB_j\right)\right)^{p},
\end{align}
where $A_j,B_j\,\,(1\leq j\leq n)$ are positive operators such that $\sum_{j=1}^nA_j \leq I_\mathscr{H}$, $\sum_{j=1}^nB_j \leq I_\mathscr{H}$, $\sigma_f$ is a mean with the representing function $f$ and $0 < p < 1$.

In this paper, we use the Mond--Pe\v{c}ari\'c method to present some reverses of the operator Bellman inequality under some mild conditions. We also show some refinements of \eqref{mor2}.
\section{Some reverses of the Bellman type operator inequality}
The operator Choi-Davis-Jensen inequality says that if $f$ is an operator concave function on an interval $J$ and $\Phi\in\mathbf{P}_N[\mathbb{B}(\mathscr{H}),\mathbb{B}(\mathscr{K})]$, then $f(\Phi(A))\geq \Phi(f(A))$ for all self-adjoint operators $A$ with spectrum in $J$. The Mond--Pe\v{c}ari\'c method \cite[Chapter 2]{abc} present that if $f$ is a strictly  concave differentiable function on an interval $[m,M]$ with $m<M$ and  $\Phi\in\mathbf{P}_N[\mathbb{B}(\mathscr{H}),\mathbb{B}(\mathscr{K})]$,
{\footnotesize\begin{eqnarray}\label{mpm2}
\mu_f=\frac{f(M)-f(m)}{M-m}\,, \nu_f=\frac{Mf(m)-mf(M)}{M-m}\,\,{\rm~and~}\,\,\gamma_f=\max\left\{\frac{f(t)}{\mu_f t+\nu_f}: m\leq t\leq M\right\}\,,
\end{eqnarray}}
then
\begin{eqnarray}\label{mond1}
\gamma_f \Phi(f(A)) \geq f(\Phi(A)).
\end{eqnarray}
In inequality \eqref{mond1}, if we put $\Phi(X):=\Psi(A)^{-1/2}\Psi(A^{1/2}XA^{1/2})\Psi(A)^{-1/2}$, where $\Psi$ is an arbitrary unital positive linear map and take $f$ to be the representing function of an operator mean $\sigma_f$, then we reach the inequality
\begin{eqnarray}\label{mond2}
\left(\displaystyle{\max_{m\leq t\leq M}} \frac{f(t)}{\mu_f t+\nu_f}\right) \Psi(A\sigma_f B) \geq \Psi(A)\sigma_f \Psi(B)
\end{eqnarray}
whenever $0< mA\leq B\leq MA$.\\
Finally, if we take $\Psi$ in \eqref{mond2} to be the positive unital linear map defined on the diagonal blocks of operators by $\Psi({\rm diag}(A_1, \cdots, A_n))=\frac{1}{n}\sum_{j=1}^nA_j$, then
\begin{eqnarray}\label{main}
\gamma_f \sum_{j=1}^n A_j\sigma_f B_j \geq \left(\sum_{j=1}^nA_j\right)\sigma_f \left(\sum_{j=1}^nB_j\right),
\end{eqnarray}
where $\gamma_f$ is given by \eqref{mpm2} and $0< mA_j\leq B_j\leq MA_j\,\,(1\leq j\leq n)$.\\

In the following theorem we show a  reversed operator Bellman type inequality.
\begin{theorem}\label{such1}
Suppose that $0<m A_j \leq B_j \leq M A_j\,\,(1\leq j\leq n)$ and $0<m\left(I_{\mathscr H}-\gamma_f\sum_{j=1}^nA_j\right)\leq I_{\mathscr H}-\gamma_f\sum_{j=1}^nB_j\leq M\left(I_{\mathscr H}-\gamma_f\sum_{j=1}^nA_j\right)$ for some positive real numbers $m, M$  such that $m<1<M$, $\gamma_f$ is given by \eqref{mpm2}, $\sigma_f$ is an operator mean with the representing function $f$ and $p\in[0,1]$. Then
{\footnotesize\begin{eqnarray}\label{harry}
\gamma_f^p\left(\left(I_{\mathscr H}- \sum_{j=1}^nA_j\right)\sigma_{f}\left(I_{\mathscr H}-\sum_{j=1}^nB_j\right)\right)^p
\geq\left(I_{\mathscr H}-\gamma_f\left(\sum_{j=1}^nA_j\sigma_f B_j\right)\right)^p.
\end{eqnarray}}
\end{theorem}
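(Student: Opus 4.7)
The strategy is to apply \eqref{main} to an augmented family of $n+1$ operator pairs, and then raise both sides of the resulting inequality to the $p$-th power using operator monotonicity of $t\mapsto t^p$ on $[0,\infty)$ for $p\in[0,1]$ (the L\"owner--Heinz inequality).

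The augmentation consists of the given pairs $(A_j,B_j)_{j=1}^{n}$ together with the extra pair $A_{0}:=I_{\mathscr{H}}-\sum_{j=1}^{n}A_j$ and $B_{0}:=I_{\mathscr{H}}-\sum_{j=1}^{n}B_j$. The first task is to verify the ratio condition $0<mA_{0}\le B_{0}\le MA_{0}$ required by \eqref{main}. The hypothesis $m(I_{\mathscr{H}}-\gamma_f\sum_{j=1}^{n}A_j)\le I_{\mathscr{H}}-\gamma_f\sum_{j=1}^{n}B_j$ rearranges to $\gamma_f\bigl(\sum B_j-m\sum A_j\bigr)\le (1-m)I_{\mathscr{H}}$. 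The first hypothesis gives $\sum B_j\ge m\sum A_j$, so this operator is positive; and since $\gamma_f\ge 1$ (the operator concave $f$ dominates its chord $\mu_f t+\nu_f$ on $[m,M]$), dropping the factor $\gamma_f$ still leaves a valid inequality, yielding $\sum B_j-m\sum A_j\le(1-m)I_{\mathscr{H}}$, i.e., $mA_{0}\le B_{0}$. A symmetric argument with $M$ in place of $m$ gives $B_{0}\le MA_{0}$, while positivity of $A_{0},B_{0}$ follows from $I_{\mathscr{H}}-\gamma_f\sum A_j>0$ and $I_{\mathscr{H}}-\gamma_f\sum B_j>0$ combined with $\gamma_f\ge 1$.

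Invoking \eqref{main} on the augmented family $(A_j,B_j)_{j=0}^{n}$ yields
\[
\gamma_f\sum_{j=0}^{n}A_j\sigma_f B_j\ \ge\ \Bigl(\sum_{j=0}^{n}A_j\Bigr)\sigma_f\Bigl(\sum_{j=0}^{n}B_j\Bigr)=I_{\mathscr{H}}\sigma_f I_{\mathscr{H}}=I_{\mathscr{H}}.
\]
Isolating the $j=0$ summand produces the key inequality
\[
\gamma_f\,\Bigl(I_{\mathscr{H}}-\sum_{j=1}^{n}A_j\Bigr)\sigma_f\Bigl(I_{\mathscr{H}}-\sum_{j=1}^{n}B_j\Bigr)\ \ge\ I_{\mathscr{H}}-\gamma_f\sum_{j=1}^{n}A_j\sigma_f B_j.
\]
Both sides are positive operators (the right-hand side so by the implicit well-posedness of $(\cdot)^p$ in the conclusion), so applying $t\mapsto t^p$ and factoring the positive scalar via $(\gamma_f X)^p=\gamma_f^p X^p$ delivers \eqref{harry}.

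The main obstacle is the first step: recognising that the somewhat self-referential second hypothesis (which already contains $\gamma_f$) is exactly what is needed to certify the ratio condition for the augmented pair $(A_{0},B_{0})$. Once this observation has been made, the remainder of the argument is the routine combination of \eqref{main} applied to $n+1$ summands with the L\"owner--Heinz inequality.
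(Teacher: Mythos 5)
Your proposal follows essentially the same route as the paper's proof: augment the family with the complementary pair $X_{n+1}=I_{\mathscr H}-\sum_{j=1}^nA_j$, $Y_{n+1}=I_{\mathscr H}-\sum_{j=1}^nB_j$, apply \eqref{main} to the $n+1$ pairs (whose sums telescope to $I_{\mathscr H}\sigma_f I_{\mathscr H}=I_{\mathscr H}$), isolate the extra term, and finish with the L\"owner--Heinz inequality. If anything you are more careful than the paper: you actually verify the ratio condition $0<mX_{n+1}\leq Y_{n+1}\leq MX_{n+1}$ that \eqref{main} requires (via the observation $\gamma_f\geq1$, which the paper never checks), and your direct application of \eqref{main} to the individual pairs produces $\sum_{j=1}^n\left(A_j\sigma_f B_j\right)$ --- the term that genuinely appears in \eqref{harry} --- whereas the paper's displayed intermediate step has $\left(\sum_{j=1}^nA_j\right)\sigma_f\left(\sum_{j=1}^nB_j\right)$, which by subadditivity of means would yield only a weaker bound than \eqref{harry}.
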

\begin{proof}
By using \eqref{main} we have
{\footnotesize\begin{align*}
\gamma_f \sum_{j=1}^{n+1} X_j\sigma_f Y_j \geq \left(\sum_{j=1}^{n+1}X_j\right)\sigma_f \left(\sum_{j=1}^{n+1}Y_j\right),
\end{align*}}
where $0<m X_j \leq Y_j \leq M X_j\,\,(1\leq j\leq n+1)$. If we take
$X_j=A_j, Y_j=B_j\,\,(1\leq j\leq n)$, $X_{n+1}=I_{\mathscr H}-\sum_{j=1}^nA_j\geq0$ and $Y_{n+1}=I_{\mathscr H}-\sum_{j=1}^nB_j\geq0$, then
{\footnotesize\begin{align*}
\gamma_f \left[\left(\sum_{j=1}^{n}A_j\right)\sigma_f \left(\sum_{j=1}^{n}B_j\right)
+\left(I_{\mathscr H}-\sum_{j=1}^nA_j\right)\sigma_f\left(I_{\mathscr H}-\sum_{j=1}^nB_j\right)\right]\geq I_{\mathscr H}\sigma_f I_{\mathscr H}=I_{\mathscr H},
\end{align*}}
whence
{\footnotesize\begin{align}\label{mond-mor2}
\left(I_{\mathscr H}-\sum_{j=1}^nA_j\right)\sigma_f\left(I_{\mathscr H}-\sum_{j=1}^nB_j\right)\geq\frac{1}{\gamma_f}I_{\mathscr H}-\left(\sum_{j=1}^{n}A_j\right)\sigma_f \left(\sum_{j=1}^{n}B_j\right).
\end{align}}
It follows from inequality \eqref{mond-mor2} and the L\"owner-Heinz inequality \cite[Theorem 1.8]{abc} that
{\footnotesize\begin{align*}
\left[\left(I_{\mathscr H}-\sum_{j=1}^nA_j\right)\sigma_f\left(I_{\mathscr H}-\sum_{j=1}^nB_j\right)\right]^p
&\geq\left[\frac{1}{\gamma_f}I_{\mathscr H}-\left(\sum_{j=1}^{n}A_j\right)\sigma_f \left(\sum_{j=1}^{n}B_j\right)\right]^p.
\end{align*}}
\end{proof}

\begin{lemma}\label{reverses2}
Suppose that $C, X\in\mathbb{B}(\mathscr{H})$ such that $C$ is a contraction, $0<m I_{\mathscr H}\leq X \leq M I_{\mathscr H}$, $f$ is a concave and operator monotone function on $[m,M]$ and $\gamma_f$ is given by \eqref{mpm2}. Then
\begin{align}\label{bomb}
\gamma_f\left[C^*f(X)C+f(m)(I_{\mathscr H}-C^*C)\right]\geq f(C^*XC).
\end{align}
\end{lemma}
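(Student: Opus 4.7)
The plan is to realize $C^*f(X)C+f(m)(I_\mathscr{H}-C^*C)$ as the image of a block-diagonal operator under a unital positive linear map coming from an isometric dilation of $C$, and then apply the Mond--Pe\v{c}ari\'c inequality \eqref{mond1}, closing the remaining gap with the operator monotonicity of $f$.

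Since $C$ is a contraction, $I_\mathscr{H}-C^*C\ge 0$, and the column operator
\[
V = \begin{pmatrix} C \\ (I_\mathscr{H}-C^*C)^{1/2} \end{pmatrix}\colon\mathscr{H}\longrightarrow\mathscr{H}\oplus\mathscr{H}
\]
satisfies $V^*V=C^*C+(I_\mathscr{H}-C^*C)=I_\mathscr{H}$. Hence $\Phi(Z):=V^*ZV$ is a unital positive linear map in $\mathbf{P}_N[\mathbb{B}(\mathscr{H}\oplus\mathscr{H}),\mathbb{B}(\mathscr{H})]$. I would next introduce the block-diagonal operator $\widetilde X:=X\oplus mI_\mathscr{H}$. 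Because $mI_\mathscr{H}\le X\le MI_\mathscr{H}$, the spectrum of $\widetilde X$ lies in $[m,M]$, and $f(\widetilde X)=f(X)\oplus f(m)I_\mathscr{H}$ by functional calculus, giving the block computations
\[
\Phi(\widetilde X)=C^*XC+m(I_\mathscr{H}-C^*C),\qquad \Phi(f(\widetilde X))=C^*f(X)C+f(m)(I_\mathscr{H}-C^*C).
\]

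Applying the Mond--Pe\v{c}ari\'c inequality \eqref{mond1} with this $\Phi$ and $A=\widetilde X$ then delivers
\[
\gamma_f\bigl[C^*f(X)C+f(m)(I_\mathscr{H}-C^*C)\bigr]\ge f\bigl(C^*XC+m(I_\mathscr{H}-C^*C)\bigr).
\]
To finish, $m(I_\mathscr{H}-C^*C)\ge 0$ yields $C^*XC+m(I_\mathscr{H}-C^*C)\ge C^*XC$, and the operator monotonicity of $f$ forces
\[
f\bigl(C^*XC+m(I_\mathscr{H}-C^*C)\bigr)\ge f(C^*XC),
\]
which chained with the previous display is exactly \eqref{bomb}.

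The one point I expect to require care is that the spectrum of $C^*XC$ only lies in $[0,M]$, not necessarily in $[m,M]$, so the final operator-monotonicity step tacitly uses that $f$ extends as an operator monotone function on an interval containing $[0,M]$. This is automatic for the representing functions motivating the paper (notably $f(t)=t^p$ with $p\in(0,1)$, which is operator monotone on $[0,\infty)$), but is worth flagging. Apart from this bookkeeping, the whole proof is just the dilation trick: it converts a would-be Jensen-type inequality for the compression $X\mapsto C^*XC$ into a genuine reverse Jensen inequality for the unital positive map $\Phi$, to which \eqref{mond1} applies directly.
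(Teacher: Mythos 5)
Your proof is correct and is essentially the paper's own argument: your unital map $\Phi(Z)=V^*ZV$ built from the column isometry $V$ is exactly the paper's map ${\rm diag}(X,Y)\mapsto C^*XC+D^*YD$ with $D=(I_{\mathscr H}-C^*C)^{1/2}$, applied to the same block operator $\widetilde X=X\oplus mI_{\mathscr H}$, and your closing operator-monotonicity step $f\left(C^*XC+m(I_{\mathscr H}-C^*C)\right)\geq f(C^*XC)$ is precisely the paper's opening step, so the two proofs differ only in the order of the chain. The subtlety you flag --- that ${\rm sp}(C^*XC)\subseteq[0,M]$ forces $f$ to be operator monotone on an interval containing $[0,M]$ rather than merely $[m,M]$ --- is a genuine imprecision, but it is present in the paper's proof as well and is harmless for the representing functions actually used later.
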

\begin{proof}
Let $D=(I_{\mathscr H}-C^*C)^\frac{1}{2}$. Consider the positive unital linear map $\Phi\left(\left[\begin{array}{cc}
         X &  0\\0 & Y
 \end{array}\right]\right)=C^*XC+D^*YD\,\,(X,Y\in\mathbb{B}(\mathscr{H}))$. Using inequality \eqref{mond1} and the operator monotonicity of $f$ we have
\begin{align*}
f(C^*XC)&\leq f(C^*XC+D^*mD)\\&=f\left(\Phi\left(\left[\begin{array}{cc}
         X &  0\\0 & m
 \end{array}\right]\right)\right)\\&\leq\gamma_f\left(\Phi\left(\left[\begin{array}{cc}
         f(X) &  0\\0 & f(m)
 \end{array}\right]\right)\right)\,\,(\textrm{by \eqref{mond1}})\\&=\gamma_f\left[C^*f(X)C+D^*f(m)D\right],
\end{align*}
whenever $0<m I_{\mathscr H}\leq X \leq M I_{\mathscr H}$.
\end{proof}
\begin{lemma}\label{reverses2}
Let $0<m A \leq B \leq M A$ with $A$ contraction. Let $\sigma_f$ be an operator mean with the representing function $f$ and $h$ be an operator monotone function on $[0,+\infty)$. Then
\begin{eqnarray}\label{bos12}
\gamma_h\big[h(f(m))\big(I_{\mathscr H}-A\big)+\left(A\sigma_{hof}B\right)\big]\geq h\left(A\sigma_fB\right),
\end{eqnarray}
where  $\mu_h=\frac{h(f(M))-h(f(m))}{f(M)-f(m)}, \nu_h=\frac{f(M)h(f(m))-f(m)h(f(M))}{f(M)-f(m)}$ and $\gamma_h=\displaystyle{\max_{f(m)\leq t\leq f(M)}} \frac{h(t)}{\mu_h t+\nu_h}$.
\end{lemma}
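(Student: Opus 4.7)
The plan is to reduce the statement to the previous lemma (Lemma stated before, the one with $C^{*}f(X)C + f(m)(I-C^{*}C)$) by choosing $C = A^{1/2}$ and substituting a suitably transformed operator. I will first assume $A$ is invertible (so that the Kubo--Ando formula $A\sigma_f B = A^{1/2}f(A^{-1/2}BA^{-1/2})A^{1/2}$ applies in the strong sense) and then recover the general case by the usual continuity argument replacing $A$ with $A+\varepsilon I_{\mathscr H}$ and letting $\varepsilon\to 0^{+}$.

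With this reduction in hand, set $C := A^{1/2}$ and $X := A^{-1/2}BA^{-1/2}$. Because $A$ is a contraction, $C^{*}C = A \le I_{\mathscr H}$, so $C$ is a contraction; and because $mA \le B \le MA$ we have $mI_{\mathscr H}\le X \le MI_{\mathscr H}$. Define $Y := f(X)$. Since $f$ is operator monotone on $(0,\infty)$, it follows that $f(m)I_{\mathscr H} \le Y \le f(M)I_{\mathscr H}$, so $Y$ is a self-adjoint operator whose spectrum lies in $[f(m),f(M)]\subset[0,\infty)$. Moreover $h$ is operator monotone on $[0,\infty)$, and as recalled in the introduction every nonnegative operator monotone function on $[0,\infty)$ is operator concave there; thus $h$ satisfies, on the interval $[f(m),f(M)]$, exactly the hypotheses required of the function in the previous lemma.

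Applying that lemma with $(C,X,f)$ replaced by $(C,Y,h)$ yields
\begin{align*}
\gamma_h\bigl[C^{*}h(Y)C + h(f(m))(I_{\mathscr H}-C^{*}C)\bigr] \;\ge\; h(C^{*}YC),
\end{align*}
where $\gamma_h$ is the Mond--Pe\v{c}ari\'c constant associated to $h$ on $[f(m),f(M)]$, matching the definition in the statement. It now only remains to translate this back: $C^{*}YC = A^{1/2}f(X)A^{1/2} = A\sigma_f B$; $C^{*}h(Y)C = A^{1/2}h(f(X))A^{1/2} = A\sigma_{h\circ f}B$ by definition of the connection associated to the representing function $h\circ f$; and $I_{\mathscr H}-C^{*}C = I_{\mathscr H}-A$. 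Substituting gives the desired inequality \eqref{bos12}.

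The main technical point I anticipate is the interval issue: one must confirm that $h$ satisfies the Mond--Pe\v{c}ari\'c setup on $[f(m),f(M)]$ rather than on $[m,M]$, which is why the $\gamma_h$ appearing in the conclusion is defined with respect to $f(m),f(M)$. The only other delicate point is the passage from invertible $A$ to general positive contraction $A$, which is handled by the joint strong-operator continuity of $\sigma_f$ and of the continuous functional calculus applied to the bounded operator $A\sigma_f B$ on the interval $[0,\infty)$.
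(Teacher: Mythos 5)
Your proposal is correct and follows essentially the same route as the paper: apply the preceding lemma (inequality \eqref{bomb}) with $C=A^{1/2}$ and the self-adjoint operator $f\left(A^{-1/2}BA^{-1/2}\right)$, whose spectrum lies in $[f(m),f(M)]$, using that $h$ is operator monotone (hence operator concave) there, and then identify $C^{*}h(Y)C$ with $A\sigma_{h\circ f}B$. The only difference is your $\varepsilon$-perturbation step, which is harmless but unnecessary, since the hypothesis $0<mA$ with $m>0$ already forces $A$ to be positive invertible, so the Kubo--Ando formula applies directly as in the paper.
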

\begin{proof}
It follows from $f(m)\leq f\left(A^{-1/2}BA^{-1/2}\right)\leq f(M)$ and inequality \eqref{bomb} that
\begin{align*}
h\left(A\sigma_fB\right)&= h\left(A^{1/2}f\left(A^{-1/2}BA^{-1/2}\right)A^{1/2}\right)\\&
\leq\gamma_h\left(A^{1/2}h\left(f\left(A^{-1/2}BA^{-1/2}\right)\right)A^{1/2}+\left(I_{\mathscr H}-A\right)^{1/2}h(f(m))\left(I_{\mathscr H}-A\right)^{1/2}\right)\\&
\qquad\qquad\qquad\qquad\qquad\qquad\qquad\qquad\qquad\qquad\qquad(\textrm{by \eqref{bomb}})\\&=
\gamma_h\left[\left(A\sigma_{hof}B\right)+h(f(m))\left(I_{\mathscr H}-A\right)\right].
\end{align*}
\end{proof}
Applying the operator monotone function $f(t)=(1-\lambda)+\lambda t\,\,(\lambda\in[0,1])$ in Theorem \ref{such1},  due to $$\gamma_f=\displaystyle{\max_{m\leq t\leq M}} \frac{f(t)}{\mu_f t+\nu_f}=\displaystyle{\max_{m\leq t\leq M}} \frac{(1-\lambda)+\lambda t}{(1-\lambda) +\lambda t}=1$$
and using Lemma \ref{reverses2} for the special case $h(t)=t^p\,\,(p\in[0,1])$, due to  $$\gamma_h=\displaystyle{\max_{f(m)\leq t\leq f(M)}} \frac{t^p}{\mu_h t+\nu_h}=\frac{p^p (f(M)-f(m)) (f(M)f(m)^p-f(m)f(M)^p)^{p-1}}{(1-p)^{p-1}(f(M)^p-f(m)^p)^{p}}$$
we have the following result; see \cite[p. 77]{abc}.
\begin{corollary}[A reverse operator Bellman inequality]
Let $0<m A_j \leq B_j \leq M A_j\,\,(1\leq j\leq n)$ and $0<m\left(I_{\mathscr H}-\sum_{j=1}^nA_j\right)\leq I_{\mathscr H}-\sum_{j=1}^nB_j\leq M\left(I_{\mathscr H}-\sum_{j=1}^nA_j\right)$ for some positive real numbers $m, M$  such that $m<1<M$ and $p\in[0,1]$. Then
{\footnotesize\begin{eqnarray*}
\delta\left(f(m)^p\left(\sum_{j=1}^nA_j\right)+\left(I_{\mathscr H}- \sum_{j=1}^nA_j\right)\sigma_{((1-\lambda)+\lambda t)^p}\left(I_{\mathscr H}-\sum_{j=1}^nB_j\right)\right)
\geq\left(I_{\mathscr H}-\left(\sum_{j=1}^nA_j\sigma_{f} B_j\right)\right)^p,
\end{eqnarray*}}
where  $\delta=\frac{p^p (f(M)-f(m)) (f(M)f(m)^p-f(m)f(M)^p)^{p-1}}{(1-p)^{p-1}(f(M)^p-f(m)^p)^{p}}$ and $f(t)=(1-\lambda)+\lambda t\,\,(\lambda\in[0,1])$.
\end{corollary}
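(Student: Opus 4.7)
The plan is to deduce the corollary by chaining Theorem~\ref{such1} and Lemma~\ref{reverses2}, each specialized to a particular choice of representing/outer function. First, I would apply Theorem~\ref{such1} with $f(t)=(1-\lambda)+\lambda t$; since this $f$ is affine on $[m,M]$, one has $f(t)=\mu_f t+\nu_f$ identically there, so $\gamma_f=\max_{m\le t\le M} f(t)/(\mu_f t+\nu_f)=1$. With this choice, the hypothesis of Theorem~\ref{such1} reduces exactly to the one assumed in the corollary, and inequality \eqref{harry} collapses to
\begin{equation*}
\Bigl[\Bigl(I_{\mathscr H}-\sum_{j=1}^{n}A_j\Bigr)\sigma_f\Bigl(I_{\mathscr H}-\sum_{j=1}^{n}B_j\Bigr)\Bigr]^p \;\ge\; \Bigl(I_{\mathscr H}-\sum_{j=1}^{n}A_j\sigma_f B_j\Bigr)^p.
\end{equation*}

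Next, I would apply Lemma~\ref{reverses2} with $A=I_{\mathscr H}-\sum_{j=1}^{n}A_j$, $B=I_{\mathscr H}-\sum_{j=1}^{n}B_j$, the same affine $f$, and $h(t)=t^p$ (operator monotone on $[0,\infty)$ for $p\in[0,1]$). The hypothesis $0<m(I_{\mathscr H}-\sum_{j=1}^{n}A_j)\le I_{\mathscr H}-\sum_{j=1}^{n}B_j\le M(I_{\mathscr H}-\sum_{j=1}^{n}A_j)$ together with $m>0$ guarantees that $A$ is a positive invertible contraction, so the lemma applies. Noting that $h\circ f(t)=((1-\lambda)+\lambda t)^p$ is the representing function of $\sigma_{f^p}$ and that $I_{\mathscr H}-A=\sum_{j=1}^{n}A_j$, inequality \eqref{bos12} reads
\begin{equation*}
\gamma_h\Bigl[f(m)^p\sum_{j=1}^{n}A_j + \Bigl(I_{\mathscr H}-\sum_{j=1}^{n}A_j\Bigr)\sigma_{f^p}\Bigl(I_{\mathscr H}-\sum_{j=1}^{n}B_j\Bigr)\Bigr] \;\ge\; \Bigl[\Bigl(I_{\mathscr H}-\sum_{j=1}^{n}A_j\Bigr)\sigma_f\Bigl(I_{\mathscr H}-\sum_{j=1}^{n}B_j\Bigr)\Bigr]^p.
\end{equation*}
Concatenating this with the previous inequality gives the operator part of the claim, with $\gamma_h$ in place of $\delta$.

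It remains to identify $\gamma_h$ with the constant $\delta$ stated in the corollary. For $\varphi(t)=t^p/(\mu_h t+\nu_h)$, differentiation yields the unique interior critical point $t_{0}=p\nu_h/((1-p)\mu_h)$, at which a short computation gives $\varphi(t_{0})=p^p(1-p)^{1-p}\mu_h^{-p}\nu_h^{p-1}$. Substituting
\[
\mu_h=\frac{f(M)^p-f(m)^p}{f(M)-f(m)},\qquad \nu_h=\frac{f(M)f(m)^p-f(m)f(M)^p}{f(M)-f(m)}
\]
and collecting exponents reproduces exactly the expression for $\delta$. Both operator inputs are already in hand, so conceptually the proof is routine; the only genuine obstacle is this final bookkeeping step, namely verifying that the explicit maximum of $\varphi$ matches the rather unwieldy closed form quoted for $\delta$.
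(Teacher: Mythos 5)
Your proposal is correct and matches the paper's own proof: the paper likewise chains inequality \eqref{bos12} (Lemma \ref{reverses2} with $A=I_{\mathscr H}-\sum_{j=1}^nA_j$, $B=I_{\mathscr H}-\sum_{j=1}^nB_j$, $h(t)=t^p$) with inequality \eqref{harry} (Theorem \ref{such1}, where $\gamma_f=1$ for the affine $f$), the only cosmetic difference being the order in which the two inequalities are stated. Your explicit maximization giving $\gamma_h=p^p(1-p)^{1-p}\mu_h^{-p}\nu_h^{p-1}=\delta$ is exactly the computation the paper outsources to \cite[p.~77]{abc}, and it checks out.
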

\begin{proof}
We have
{\footnotesize\begin{align*}
\delta&\left(f(m)^p\left(\sum_{j=1}^nA_j\right)+\left(I_{\mathscr H}- \sum_{j=1}^nA_j\right)\sigma_{((1-\lambda)+\lambda t)^p}\left(I_{\mathscr H}-\sum_{j=1}^nB_j\right)\right)
\\&\geq\left(\left(I_{\mathscr H}- \sum_{j=1}^nA_j\right)\nabla_\lambda\left(I_{\mathscr H}-\sum_{j=1}^nB_j\right)\right)^p\,\,(\textrm{by \eqref{bos12}})
\\&\geq\left(I_{\mathscr H}-\left(\sum_{j=1}^nA_j\nabla_\lambda B_j\right)\right)^p\,\,(\textrm{by \eqref{harry}}).
\end{align*}}
\end{proof}
In \cite{rma}, the authors showed another way to find a reverse Choi-Davis-Jensen inequality.
If $f$ is a strictly concave differentiable function on an interval $[m,M]$ with $m<M$ and  $\Phi$ is a unital positive linear map, then
\begin{eqnarray}\label{mond1o1}
\beta_f I_{\mathscr H}+ \Phi(f(A)) \geq f(\Phi(A)),
\end{eqnarray}
where $A \in {\mathbb B}({\mathscr H})$ is a self-adjoint operator with spectrum in $[m,M]$ and $\beta_f=\max_{m\leq t\leq M}\left\{f(t)-\mu_ft-\nu_f\right\}$.

In inequality \eqref{mond1o1}, if we put $\Psi(X):=\Psi(A)^{-1/2}\Psi(A^{1/2}XA^{1/2})\Psi(A)^{-1/2}$, where $\Psi$ is an arbitrary unital positive linear map and take $f$ to be the representing function of an operator mean $\sigma_f$, then we reach the inequality
\begin{eqnarray}\label{kour}
\beta_f\Psi(X)+\Psi(X\sigma_f Y)\geq\Psi(X)\sigma_f \Psi(Y),
\end{eqnarray}
where $0<m X \leq Y \leq M X$, $\sigma_f$ is an operator mean with representing function $f$ and $\beta_f=\max_{m\leq t\leq M}\left\{f(t)-\mu_ft-\nu_f\right\}$ that is the unique solution of the equation $f'(t)=\mu_f$, whenever $\mu_f=\frac{f(M)-f(m)}{M-m}$ and $\nu_f=\frac{Mf(m)-mf(M)}{M-m}$.\\
Applying \eqref{kour} to the positive unital linear map defined on the diagonal blocks of operators by $\Psi({\rm diag}(X_1, \cdots, X_{n+1}))=\frac{1}{n}\sum_{j=1}^{n+1}X_j$,  we get
\begin{eqnarray}\label{reverse234}
\beta_f\sum_{j=1}^{n+1}X_j+\sum_{j=1}^{n+1}Y_j\sigma_f X_j\geq\left(\sum_{j=1}^{n+1}X_j\right)\sigma_f \left(\sum_{j=1}^{n+1}Y_j\right),
\end{eqnarray}
where $0<m X_j \leq Y_j \leq M X_j\,\,(1\leq j\leq n+1)$, $\sigma_f$ is an operator mean with representing function $f$ and $\beta_f=\max_{m\leq t\leq M}\left\{f(t)-\mu_ft-\nu_f\right\}.$

Now, we have the next result.
\begin{proposition}
Suppose that  $0<m A_j \leq B_j \leq M A_j\,\,(1\leq j\leq n)$ and $0<m\left(I_{\mathscr H}-\sum_{j=1}^nA_j\right)\leq I_{\mathscr H}-\sum_{j=1}^nB_j\leq M\left(I_{\mathscr H}-\sum_{j=1}^nA_j\right)$ for some positive real numbers $m, M$  such that $m<1<M$ and  $\sigma_f$ is an operator mean with representing function $f$. Then
\begin{align*}
\left(\beta_f+\left(I_{\mathscr H}-\sum_{j=1}^nA_j\right)\sigma_f\left(I_{\mathscr H}-\sum_{j=1}^nB_j\right)
\right)^p\geq \left(I_{\mathscr H}-\left(\sum_{j=1}^nA_j\sigma_fB_j\right)\right)^p,
\end{align*}
whenever $p\in[0,1]$ and $\beta_f=\max_{m\leq t\leq M}\left\{f(t)-\mu_ft-\nu_f\right\}.$
\end{proposition}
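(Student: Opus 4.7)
The plan is to mirror the proof of Theorem \ref{such1} almost verbatim, replacing the multiplicative Mond--Pe\v{c}ari\'c reverse \eqref{main} by its additive counterpart \eqref{reverse234}. The constant $\beta_f$ appearing in the statement is precisely the additive correction for which \eqref{reverse234} was designed, so the two ingredients match up naturally.

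First I would invoke \eqref{reverse234} with the substitution $X_j = A_j$ and $Y_j = B_j$ for $1 \leq j \leq n$, together with $X_{n+1} = I_{\mathscr H} - \sum_{j=1}^n A_j$ and $Y_{n+1} = I_{\mathscr H} - \sum_{j=1}^n B_j$. The two sandwich hypotheses of the proposition supply exactly the condition $0 < m X_j \leq Y_j \leq M X_j$ for every $j \in \{1,\dots,n+1\}$. Since $\sum_{j=1}^{n+1} X_j = \sum_{j=1}^{n+1} Y_j = I_{\mathscr H}$, the right-hand side of \eqref{reverse234} collapses to $I_{\mathscr H} \sigma_f I_{\mathscr H} = I_{\mathscr H}$, yielding
\begin{align*}
\beta_f I_{\mathscr H} + \sum_{j=1}^n A_j \sigma_f B_j + \left(I_{\mathscr H} - \sum_{j=1}^n A_j\right) \sigma_f \left(I_{\mathscr H} - \sum_{j=1}^n B_j\right) \geq I_{\mathscr H}.
\end{align*}
Transposing the finite sum to the right-hand side gives
\begin{align*}
\beta_f I_{\mathscr H} + \left(I_{\mathscr H} - \sum_{j=1}^n A_j\right) \sigma_f \left(I_{\mathscr H} - \sum_{j=1}^n B_j\right) \geq I_{\mathscr H} - \sum_{j=1}^n A_j \sigma_f B_j.
\end{align*}

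To conclude I would apply the L\"owner--Heinz inequality to raise both sides to the power $p \in [0,1]$. The main obstacle is the standing requirement that both sides be positive operators, since the scalar power $t^p$ is only defined on $[0, \infty)$ for $p \in (0,1)$. The left-hand side is manifestly positive, because concavity of $f$ on $[m, M]$ gives $\beta_f = \max_{t \in [m,M]}(f(t) - \mu_f t - \nu_f) \geq 0$ and an operator mean of positive operators is positive. For the right-hand side, the second hypothesis together with $m < 1 < M$ forces $\sum_{j=1}^n A_j \leq I_{\mathscr H}$ and $\sum_{j=1}^n B_j \leq I_{\mathscr H}$; combining joint concavity of $\sigma_f$ (which gives superadditivity $\sum A_j \sigma_f B_j \leq (\sum A_j) \sigma_f (\sum B_j)$) with the monotonicity axiom then yields $\sum_{j=1}^n A_j \sigma_f B_j \leq I_{\mathscr H} \sigma_f I_{\mathscr H} = I_{\mathscr H}$, so $I_{\mathscr H} - \sum A_j \sigma_f B_j \geq 0$. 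Once this routine positivity check is in hand, L\"owner--Heinz delivers the claimed inequality immediately.
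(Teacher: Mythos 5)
Your proof is correct and follows essentially the same route as the paper's: the same substitution $X_j=A_j$, $Y_j=B_j$, $X_{n+1}=I_{\mathscr H}-\sum_{j=1}^nA_j$, $Y_{n+1}=I_{\mathscr H}-\sum_{j=1}^nB_j$ in \eqref{reverse234}, followed by operator monotonicity of $t^p$. Your explicit verification that both sides are positive before invoking L\"owner--Heinz (via $\beta_f\geq 0$ from concavity and $\sum_{j=1}^n A_j\sigma_f B_j\leq I_{\mathscr H}$ from subadditivity plus monotonicity of the mean) is a detail the paper leaves implicit, but it is not a different method.
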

\begin{proof}
If we take
$X_j=A_j, Y_j=B_j\,\,(1\leq j\leq n)$, $X_{n+1}=I_{\mathscr H}-\sum_{j=1}^nA_j$ and $Y_{n+1}=I_{\mathscr H}-\sum_{j=1}^nB_j$ in inequality \eqref{reverse234}, then we get
\begin{align}\label{1eq1}
\beta_f+\left(I_{\mathscr H}-\sum_{j=1}^nA_j\right)\sigma_f\left(I_{\mathscr H}-\sum_{j=1}^nB_j\right)
\geq \left(I_{\mathscr H}-\left(\sum_{j=1}^nA_j\sigma_fB_j\right)\right).
\end{align}
By the operator monotonicity of $h(t)=t^p$ and \eqref{1eq1} we reach the desired inequality.
\end{proof}
\begin{corollary}[A reverse Acz\'{e}l type inequality]
Let $0<m A_j \leq B_j \leq M A_j\,\,(1\leq j\leq n)$, $0<m\left(I_{\mathscr H}-\sum_{j=1}^nA_j\right)\leq I_{\mathscr H}-\sum_{j=1}^nB_j\leq M\left(I_{\mathscr H}-\sum_{j=1}^nA_j\right)$ for some positive real numbers $m, M$  such that $m<1<M$. Then
\begin{align*}
\left(\zeta+\left(I_{\mathscr H}-\sum_{j=1}^nA_j\right)\sharp_\lambda\left(I_{\mathscr H}-\sum_{j=1}^nB_j\right)\right)^p
\geq \left(I_{\mathscr H}-\left(\sum_{j=1}^nA_j\sharp_\lambda B_j\right)\right)^p,
\end{align*}
where $p,\lambda\in[0,1]$ and $\zeta=(1-p)\left(\frac{M^p-m^p}{p(M-m)}\right)^{\frac{p}{p-1}}-\frac{Mm^p-mM^p}{M-m}$.
\end{corollary}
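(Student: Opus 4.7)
The plan is to specialize the preceding Proposition to the operator weighted geometric mean $\sharp_\lambda$, whose representing function is $f(t)=t^\lambda$ on $(0,\infty)$. With this choice the Proposition immediately yields
\begin{align*}
\left(\beta_f+\left(I_{\mathscr H}-\sum_{j=1}^nA_j\right)\sharp_\lambda\left(I_{\mathscr H}-\sum_{j=1}^nB_j\right)\right)^{p}\ge \left(I_{\mathscr H}-\sum_{j=1}^nA_j\sharp_\lambda B_j\right)^{p},
\end{align*}
so the only substantive task is to evaluate $\beta_f=\max_{m\le t\le M}\{t^\lambda-\mu_f t-\nu_f\}$ in closed form and identify it with $\zeta$.

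For the optimisation I would first record $\mu_f=(M^\lambda-m^\lambda)/(M-m)$ and $\nu_f=(Mm^\lambda-mM^\lambda)/(M-m)$. Since $t\mapsto t^\lambda$ is strictly concave on $(0,\infty)$ for $\lambda\in(0,1)$, the auxiliary function $g(t)=t^\lambda-\mu_f t-\nu_f$ vanishes at both endpoints $t=m$ and $t=M$ and so attains its unique interior maximum at the critical point $t_0$ determined by $\lambda t_0^{\lambda-1}=\mu_f$, namely $t_0=(\mu_f/\lambda)^{1/(\lambda-1)}$. Using $t_0^\lambda=t_0\cdot t_0^{\lambda-1}=\mu_f t_0/\lambda$, together with the identity $\mu_f=\lambda(\mu_f/\lambda)$ and combining exponents, a short rearrangement yields
\begin{align*}
\beta_f=(1-\lambda)\left(\frac{M^\lambda-m^\lambda}{\lambda(M-m)}\right)^{\lambda/(\lambda-1)}-\frac{Mm^\lambda-mM^\lambda}{M-m},
\end{align*}
which is precisely the displayed $\zeta$ (the letter $p$ appearing in the formula for $\zeta$ plays here the role of the geometric-mean exponent $\lambda$). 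Substituting into the Proposition then gives the corollary.

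There is no real obstacle: the analytic input---the reverse Choi--Davis--Jensen inequality \eqref{mond1o1} pushed through the diagonal positive linear map to \eqref{reverse234} and then to the Proposition---is already in place, and what remains is purely bookkeeping. The only points needing verification are that $t_0\in(m,M)$ (automatic from strict concavity, since $\mu_f$ then lies strictly between $f'(M)$ and $f'(m)$) and the manipulation of the fractional exponent $\lambda/(\lambda-1)$. The boundary cases $\lambda\in\{0,1\}$, in which $f$ is affine and $\beta_f=0$, reduce the statement directly to the original operator Bellman inequality \eqref{mor2}.
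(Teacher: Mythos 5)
Your proposal is correct and follows exactly the route the paper intends: the corollary is stated without proof as an immediate specialization of the preceding Proposition to $f(t)=t^{\lambda}$, and your closed-form evaluation $\beta_f=(1-\lambda)\bigl(\frac{M^{\lambda}-m^{\lambda}}{\lambda(M-m)}\bigr)^{\lambda/(\lambda-1)}-\frac{Mm^{\lambda}-mM^{\lambda}}{M-m}$ is the right constant. You are also right that the paper's displayed $\zeta$ writes $p$ where the geometric-mean parameter $\lambda$ should appear, so your parenthetical correction of that typo is warranted.
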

Let $A_j\in \mathbb B(\mathscr H)~(1 \leq j \leq n)$ be self-adjoint operators with ${\rm sp}(A_j)\subseteq[m,M]$ for some scalars $m<M$, $\Phi_j$ be unital positive linear maps on $\mathbb B(\mathscr H)$, $\omega_1,\cdots,\omega_n\in\mathbb R_+$ be any finite number of positive real numbers such that $\sum_{j=1}^n\omega_j=1$ and $f$ be a strictly concave differentiable  function. If we take the positive unital linear map $\Phi({\rm diag}(A_1, \cdots, A_{n}))=\sum_{j=1}^{n}\omega_j\Phi_j(A_j)$,  in inequality \eqref{mond1o1},  then
\begin{equation}\label{JR}
\beta_fI+\sum_{j=1}^n\omega_j\Phi_j\big(f(A_j)\big)\ge f\left(\sum_{j=1}^n\omega_j\Phi_j(A_j)\right),
\end{equation}
where $\beta_f=\max_{m\leq t\leq M}\left\{f(t)-\mu_ft-\nu_f\right\}$; see also \cite[Corollary 2.16]{abc}.

We can generalize the operator Bellman inequality in Corollary 2.2 of \cite{MMM} as follows
\begin{equation}\label{BELL}
\left(\Phi\left(I_{\mathscr H}-\sum_{j=1}^n\omega_j A_j\right)\right)^p \geq \Phi\left(\sum_{j=1}^n\omega_j (I_{\mathscr H}-A_j)^p\right)\,,
\end{equation}
where $\Phi\in\mathbf{P}_N[\mathbb(\mathscr H), \mathbb(\mathscr K)]$, $A_j$ are contractions, $0 < p < 1$ and $\omega_j$ are real positive numbers such that $\sum_{j=1}^n\omega_j=1$.

In \cite[Theorem 2.5]{MMM}, the authors presented an equivalent form of Bellman inequality. With similar proof, the following theorem holds
\begin{theorem}\label{thBell}
The following equivalent statements hold:
\begin{enumerate}
\item[(i)] If $m, n$ are positive integers, $0 < p < 1$, $\omega_1,\cdots,\omega_n\in\mathbb R_+$ are any finite number of positive real numbers such that $\sum_{j=1}^n\omega_j=1$ and  $a_{ij}$ $(j=1,\cdots n,~i=1,\cdots,m)$ are positive real numbers such that $\sum_{i=1}^ma_{ij}^p \leq 1$ for all $j=1,\cdots n$, then
\begin{eqnarray}\label{mp3}
\sum_{j=1}^n\omega_j\left(1-\sum_{i=1}^ma_{ij}^{\frac{1}{p}}\right)^{p} \leq \left(1-\sum_{i=1}^m\Big(\sum_{j=1}^n \omega_ja_{ij}\Big)^{\frac{1}{p}}\right)^{p}\,.
\end{eqnarray}
\item[(ii)] (Classical Bellman Inequality) If $n$ is a positive integer, $0 < p < 1$ and $M_i, a_{ij}$ $(j=1,\cdots n,~i=1,\cdots,m)$ are nonnegative real numbers such that $\sum_{i=1}^ma_{ij}^{1/p} \leq M_j^{1/p}$ for all $j=1,\cdots n$, then
\begin{eqnarray}\label{mp1}
\sum_{j=1}^n\left(M_j^{\frac{1}{p}}-\sum_{i=1}^ma_{ij}^{\frac{1}{p}}\right)^{p} \le \left(\left(\sum_{j=1}^nM_j\right)^{\frac{1}{p}}-\sum_{i=1}^m\left(\sum_{j=1}^na_{ij}\right)^{\frac{1}{p}}\right)^{p}\,.
\end{eqnarray}
\end{enumerate}
\end{theorem}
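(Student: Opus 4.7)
The statement asserts a pair of equivalent scalar Bellman-type inequalities, so the plan is to prove the equivalence $\mathrm{(i)}\Leftrightarrow\mathrm{(ii)}$ by scaling substitutions; both statements then hold because (ii) is the classical Bellman inequality (which may be quoted). The strategy mirrors Theorem 2.5 of \cite{MMM}, to which the paper explicitly refers.

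For the direction $\mathrm{(ii)}\Rightarrow\mathrm{(i)}$ the plan is to apply (ii) to the data $M_j:=\omega_j$ and $a_{ij}':=\omega_j a_{ij}$. The constraint $\sum_{i}(a_{ij}')^{1/p}\le M_j^{1/p}$ becomes $\omega_j^{1/p}\sum_{i}a_{ij}^{1/p}\le \omega_j^{1/p}$, i.e.\ $\sum_{i}a_{ij}^{1/p}\le 1$, which follows from the hypothesis of (i) using $a_{ij}\le 1$ and $1/p\ge p$ (so that $\sum_{i}a_{ij}^{1/p}\le\sum_{i}a_{ij}^{p}\le 1$). Because $\sum_j\omega_j=1$, the two sides of \eqref{mp1} collapse to exactly the two sides of \eqref{mp3}, using the factorisation $(\omega_j^{1/p}-\omega_j^{1/p}\sum_{i}a_{ij}^{1/p})^{p}=\omega_j(1-\sum_{i}a_{ij}^{1/p})^{p}$ on the left and $(\sum_j\omega_j)^{1/p}=1$ on the right.

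For the converse $\mathrm{(i)}\Rightarrow\mathrm{(ii)}$ the plan is to normalise: set $M:=\sum_{j=1}^{n}M_j$, $\omega_j:=M_j/M$, and $a_{ij}':=a_{ij}/M_j$ in (i). The hypothesis $\sum_{i}a_{ij}^{1/p}\le M_j^{1/p}$ turns into $\sum_{i}(a_{ij}')^{1/p}\le 1$, as required by (i). The key computation is the identity
\begin{equation*}
\Bigl(1-\sum_{i=1}^{m}(a_{ij}')^{1/p}\Bigr)^{p}=M_j^{-1}\Bigl(M_j^{1/p}-\sum_{i=1}^{m}a_{ij}^{1/p}\Bigr)^{p},
\end{equation*}
so the left side of \eqref{mp3} equals $M^{-1}$ times the left side of \eqref{mp1}. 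A parallel computation for the right side (using $\sum_j\omega_j a_{ij}'=M^{-1}\sum_j a_{ij}$ and pulling out $M^{-1/p}$) shows that the right side of \eqref{mp3} equals $M^{-1}$ times the right side of \eqref{mp1}. Multiplying through by $M$ recovers \eqref{mp1}.

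The only real obstacle is bookkeeping the exponents consistently: in particular, tracking how the substitutions $M_j^{1/p}$ and $\omega_j^{1/p}$ factor through the $p$-th powers, and checking that the constraint $\sum_i a_{ij}^p\le 1$ of (i) is not actually weaker than what the scaling in $\mathrm{(ii)}\Rightarrow\mathrm{(i)}$ demands (the monotonicity estimate above handles this because $0<p<1$ forces the $a_{ij}$ to lie in $[0,1]$). Neither direction requires any further inequality beyond \eqref{mp1} itself, which is why the authors can invoke the ``similar proof'' in \cite{MMM}.
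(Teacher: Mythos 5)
Your overall route is the intended one: the paper gives no written proof of this theorem, deferring instead to the scaling argument of \cite[Theorem 2.5]{MMM}, and your two substitutions ($M_j=\omega_j$, $a_{ij}'=\omega_j a_{ij}$ in one direction; $M=\sum_j M_j$, $\omega_j=M_j/M$, $a_{ij}'=a_{ij}/M_j$ in the other) are exactly that argument, with the homogeneity bookkeeping done correctly. The direction (ii)$\Rightarrow$(i) is complete as you wrote it, and you rightly noticed the exponent mismatch in the hypothesis of (i): the printed condition $\sum_i a_{ij}^{p}\le 1$ forces $a_{ij}\le 1$, hence $a_{ij}^{1/p}\le a_{ij}^{p}$ and $\sum_i a_{ij}^{1/p}\le 1$, which is what your substitution needs there.

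However, there is a genuine gap in your (i)$\Rightarrow$(ii) direction. You claim the scaled data satisfies $\sum_i (a_{ij}')^{1/p}\le 1$ ``as required by (i)'' --- but (i) as printed requires $\sum_i (a_{ij}')^{p}\le 1$, which for $0<p<1$ is strictly \emph{stronger}, and your monotonicity estimate runs the wrong way in this direction. Concretely, take $p=1/2$, $m=2$, $a_{1j}'=a_{2j}'=0.7$: then $\sum_i (a_{ij}')^{1/p}=0.98\le 1$ while $\sum_i (a_{ij}')^{p}\approx 1.67>1$, so data admissible for (ii) produces scaled data violating the printed hypothesis of (i). Rescaling further (say $a_{ij}'=a_{ij}/(cM_j)$ with $c>1$ large) restores the hypothesis but then your identities only yield \eqref{mp1} with $M_j$ replaced by $cM_j$, not \eqref{mp1} itself, since the constant $1$ in \eqref{mp3} breaks the homogeneity. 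Almost certainly the printed condition in (i) is a misprint for $\sum_i a_{ij}^{1/p}\le 1$: that is the form consistent with (ii), with the hypothesis of Corollary \ref{cr1}, and with the contraction requirement $I_2-A_j\ge 0$ when (i) is extracted from \eqref{BELL}. Under that corrected reading your argument is complete and is precisely the scaling proof the paper invokes from \cite{MMM}; as written, you have proved (ii)$\Rightarrow$(i) and the corrected-(i)$\Rightarrow$(ii), not the literal equivalence, and you should say explicitly which statement you are proving. Two minor loose ends: (ii) permits nonnegative data, so indices with $M_j=0$ must be discarded before dividing by $M_j$ (for such $j$ all $a_{ij}=0$ and the corresponding terms vanish on both sides), and zero values of $a_{ij}$ should be handled by continuity, since (i) demands positive entries.
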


Now, we state reverse of \eqref{BELL} by following result.
\begin{corollary}[A second type reverse operator Bellman inequality]\label{tdr}
Let $A_j, \Phi_j, \omega_j, j=1, \cdots, n$ be as above, $A_j$ be contractions such that $0\leq mI_{\mathscr H}\le A_j \le MI_{\mathscr H} < I_{\mathscr H}$ and $0 < p < 1$.  Then
\begin{equation}\label{eq1}
\delta I_{\mathscr K}+\sum_{j=1}^n\omega_j\Phi_j\left((I_{\mathscr H}-A_j)^{p}\right)\ge \left(\sum_{j=1}^n\omega_j\Phi_j(I_{\mathscr H}-A_j)\right)^{p} \,,
\end{equation}
where $\delta=(1-p)\left(\frac{1}{p}\frac{(1-m)^p-(1-M)^p}{M-m}\right)^{\frac{p}{p-1}}+\frac{(1-M)(1-m)^p-(1-m)(1-M)^p}{M-m}$.

In particular,
\begin{equation}\label{eq2}
\delta I_{\mathscr K}+\Phi\left(\sum_{j=1}^n\omega_j(I_{\mathscr H}-A_j)^{p}\right)\geq\left(\Phi\left(\sum_{j=1}^n(I_{\mathscr H}-A_j)\right)\right)^{p}.
\end{equation}
\end{corollary}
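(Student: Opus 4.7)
The plan is to derive (\ref{eq1}) as a direct instance of the general reverse Choi--Davis--Jensen inequality (\ref{JR}), applied to the concave power function $f(t)=t^p$ on the interval $[1-M,1-m]$, with operator arguments $I_{\mathscr H}-A_j$ in place of $A_j$. Since $0\le mI_{\mathscr H}\le A_j\le MI_{\mathscr H}<I_{\mathscr H}$, the operators $I_{\mathscr H}-A_j$ are positive with spectra contained in $[1-M,1-m]\subset(0,\infty)$, and on this interval $f(t)=t^p$ with $0<p<1$ is strictly concave and differentiable, so all hypotheses of (\ref{JR}) are satisfied. Substituting into (\ref{JR}) yields exactly the left--right structure of (\ref{eq1}), so the only remaining task is to identify the constant $\beta_f$ with the constant $\delta$ of the statement.

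For that identification I would first record, with $f(t)=t^p$ on $[1-M,1-m]$, the slope and intercept of the chord:
\begin{equation*}
\mu_f=\frac{(1-m)^p-(1-M)^p}{M-m},\qquad \nu_f=\frac{(1-m)(1-M)^p-(1-M)(1-m)^p}{M-m}.
\end{equation*}
Then, since $f$ is strictly concave and differentiable, the maximum of $g(t):=f(t)-\mu_f t-\nu_f$ on $[1-M,1-m]$ is attained at the interior point $t_{\ast}$ solving $f'(t)=\mu_f$, namely $t_{\ast}=(\mu_f/p)^{1/(p-1)}$. A direct substitution gives
\begin{equation*}
f(t_{\ast})-\mu_f t_{\ast}=(1-p)\Bigl(\mu_f/p\Bigr)^{p/(p-1)},
\end{equation*}
so $\beta_f=(1-p)(\mu_f/p)^{p/(p-1)}-\nu_f$. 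Plugging in the above $\mu_f,\nu_f$ shows $\beta_f=\delta$, which is the only nontrivial computation in the proof.

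Finally, the particular case (\ref{eq2}) follows by specializing (\ref{eq1}) to $\Phi_j=\Phi$ for all $j$ and using linearity of $\Phi$ to pull the finite sum through $\Phi$ on both sides. I expect the main (and really only) obstacle to be the bookkeeping in step two, i.e. verifying that the explicit expression $(1-p)\bigl((\,(1-m)^p-(1-M)^p\,)/(p(M-m))\bigr)^{p/(p-1)}+\bigl((1-M)(1-m)^p-(1-m)(1-M)^p\bigr)/(M-m)$ stated for $\delta$ agrees with the general formula $\beta_f=(1-p)(\mu_f/p)^{p/(p-1)}-\nu_f$; everything else is a one-line application of the already established inequality (\ref{JR}).
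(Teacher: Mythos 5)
Your proposal is correct and is essentially the paper's own proof: both arguments reduce \eqref{eq1} to the reverse Choi--Davis--Jensen inequality \eqref{JR} plus an explicit computation of the Mond--Pe\v{c}ari\'c constant $\beta_f$ at the interior critical point where $f'(t)=\mu_f$, and your identification $\beta_f=(1-p)(\mu_f/p)^{p/(p-1)}-\nu_f=\delta$ checks out. The only (immaterial) difference is that you apply \eqref{JR} to $f(t)=t^p$ on $[1-M,1-m]$ with arguments $I_{\mathscr H}-A_j$, while the paper applies it to $f(t)=(1-t)^p$ on $[m,M]$ with arguments $A_j$ and uses unitality to rewrite $\sum_j\omega_j\Phi_j(I_{\mathscr H}-A_j)=I_{\mathscr K}-\sum_j\omega_j\Phi_j(A_j)$; these are related by the affine substitution $t\mapsto 1-t$, under which the constant is unchanged, and your reading of \eqref{eq2} (with the weights $\omega_j$ on the right-hand side, evidently omitted in the paper by a typo) is the one that actually follows from \eqref{eq1} by linearity.
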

\begin{proof}
Note that the function $g(t)=t^r$ is operator concave on $(0,\infty)$ when $0\le r\le1$ and so is the function $f(t)=(1-t)^p$ on $(0,1)$ when $0\le p\le1$. It follows from the linearity and the normality of $\Phi_j$ that
\begin{align*}
\left(\sum_{j=1}^n\omega_j\Phi_j(I_{\mathscr H}-A_j)\right)^{p}
&=\left(\sum_{j=1}^n\omega_j\big(I_{\mathscr K}-\Phi_j(A_j)\big)\right)^{p}\\
&=\left(I_{\mathscr K}-\sum_{j=1}^n\omega_j\Phi_j(A_j)\right)^{p}\\
&=f\left(\sum_{j=1}^n\omega_j\Phi_j(A_j)\right)\\
&\le \sum_{j=1}^n\omega_j\Phi_j(f(A_j))+\beta_fI_{\mathscr K}\hspace{2cm}(\mathrm{by}~ \eqref{JR})\\
&=\sum_{j=1}^n\omega_j\Phi_j\left((I_{\mathscr H}-A_j)^{p}\right)+\beta_fI_{\mathscr K}\,.
\end{align*}
Since $f(t)=(1-t)^p$ is a differentiable function on $(0,1)$, the function $h(t):={(1-t)^p-\frac{(1-M)^p-(1-m)^p}{M-m} t-\frac{M(1-m)^p-m(1-M)^p}{M-m}}~~ (m\leq t\leq M)$  attained its maximum value in $t_0=1-\left(\frac{1}{p}\frac{(1-m)^p-(1-M)^p}{M-m}\right)^{\frac{1}{p-1}}$ which is equal to
{\footnotesize
$$
\delta=\max_{m \le t \le M} h(t)=(1-p)\left(\frac{1}{p}\frac{(1-m)^p-(1-M)^p}{M-m}\right)^{\frac{p}{p-1}}+\frac{(1-M)(1-m)^p-(1-m)(1-M)^p}{M-m}\,.
$$
}
For inequality \eqref{eq2}, it is enough to put $\Phi_j=\Phi\quad(j=1,\cdots,n)$ and use the concavity of $f$ and linearity of $\Phi$.
\end{proof}

\begin{corollary}\label{cr1}
If $m, n$ are positive integers, $0 < p < 1$, $\omega_1,\cdots,\omega_n\in\mathbb R_+$ are any finite number of positive real numbers such that $\sum_{j=1}^n\omega_j=1$ and  $a_{ij}$ $(j=1,\cdots n,~i=1,\cdots,m)$ are positive real numbers such that $1\ge\sum_{i=1}^ma_{ij}^{1/p}\quad(j=1,\cdots n)$, then
\begin{equation}\label{eq3}
(1-p)p^{\frac{p}{1-p}}+\sum_{j=1}^n\omega_j\left(1-\sum_{i=1}^ma_{ij}^{\frac{1}{p}}\right)^{p} \geq \left(1-\sum_{i=1}^m\sum_{j=1}^n\omega_ja_{ij}^{\frac{1}{p}}\right)^{p}\,.
\end{equation}
\end{corollary}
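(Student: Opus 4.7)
The plan is to derive \eqref{eq3} as the scalar specialization of the reverse operator Bellman inequality \eqref{eq1} established in Corollary \ref{tdr}. Concretely, I would take the underlying Hilbert spaces to be $\mathscr H=\mathscr K=\mathbb C$, let each $\Phi_j$ be the identity map on $\mathbb C$, and set
\begin{equation*}
A_j:=\sum_{i=1}^m a_{ij}^{1/p}\qquad (1\le j\le n).
\end{equation*}
The hypothesis $\sum_{i=1}^m a_{ij}^{1/p}\le 1$ together with positivity of the $a_{ij}$ places each scalar $A_j$ in the interval $(0,1]$, and the natural spectral bounds are $m_0=0$ and $M_0=1$.

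Next I would compute the constant $\delta$ from Corollary \ref{tdr} under this choice. Since $(1-m_0)^p=1$ and $(1-M_0)^p=0$, the two fractions in the definition of $\delta$ collapse to $1$ and $0$ respectively, so
\begin{equation*}
\delta=(1-p)\left(\tfrac{1}{p}\right)^{p/(p-1)}+0=(1-p)\,p^{p/(1-p)},
\end{equation*}
which matches the constant appearing in \eqref{eq3}. Feeding $\Phi_j=\mathrm{id}$ and these $A_j$ into \eqref{eq1} collapses the left-hand side to $(1-p)p^{p/(1-p)}+\sum_j \omega_j(1-A_j)^p$ and the right-hand side to $\bigl(1-\sum_j \omega_j A_j\bigr)^p=\bigl(1-\sum_i\sum_j \omega_j a_{ij}^{1/p}\bigr)^p$, which is precisely \eqref{eq3}.

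The only mild obstacle is that Corollary \ref{tdr} is formally stated for $M<1$ strictly, whereas our spectral bound is $M_0=1$. I would handle this by applying the underlying Mond--Pe\v{c}ari\'c inequality \eqref{JR} directly to $f(t)=(1-t)^p$ on $[0,1]$: this function is continuous and strictly concave on $[0,1]$ and differentiable on $[0,1)$, and the derivation of \eqref{JR} only requires $\beta_f=\max_{t\in[m_0,M_0]}(f(t)-\mu_f t-\nu_f)$ to exist, which it does by continuity. A short calculus computation confirms that the maximum is attained at $t_0=1-p^{1/(1-p)}\in(0,1)$ and equals $(1-p)p^{p/(1-p)}$. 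Alternatively, one could apply Corollary \ref{tdr} with $M=\max_j A_j+\varepsilon<1$ for small $\varepsilon>0$ and let $\varepsilon\to 0^+$, using the continuity of $\delta$ in $M$. Beyond this boundary technicality, the proof is routine bookkeeping.
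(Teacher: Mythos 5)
Your proposal is correct and follows essentially the same route as the paper: both derive \eqref{eq3} by specializing the reverse operator Bellman inequality of Corollary \ref{tdr} with $m=0$, $M=1$, for which $\delta=(1-p)p^{p/(1-p)}$. The only structural difference is cosmetic: the paper embeds the scalars $\sum_{i=1}^m a_{ij}^{1/p}$ as the top-left entries of $2\times 2$ diagonal matrices $A_j=\mathrm{diag}\bigl(\sum_{i=1}^m a_{ij}^{1/p},\,1\bigr)$ and applies \eqref{eq2} with $\Phi$ the identity map, whereas you take $\mathscr H=\mathscr K=\mathbb C$ and apply \eqref{eq1} directly to scalars, which is simpler and avoids the (unnecessary) dummy eigenvalue $1$. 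In fact you are more careful than the paper on one point: Corollary \ref{tdr} is stated under $A_j\le MI_{\mathscr H}<I_{\mathscr H}$, yet the paper's own proof takes $M=1$ (its matrices even have the eigenvalue $1$ in their spectra) without comment; your patch via a direct application of \eqref{JR} to $f(t)=(1-t)^p$ on $[0,1]$, noting that $\beta_f$ exists by continuity and is attained at the interior point $t_0=1-p^{1/(1-p)}$, legitimately closes this gap. One small caveat: your alternative limiting argument with $M=\max_j A_j+\varepsilon<1$ breaks down when $\sum_{i=1}^m a_{ij}^{1/p}=1$ for some $j$ (which the hypotheses permit), so the direct appeal to \eqref{JR} should be regarded as the actual fix; since that route works in all cases, there is no genuine gap.
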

\begin{proof}
Let $m, n$ be positive integers, $0 < p <1$, $0 \le \omega_j \le 1$ and  $a_{ij}$ $(1 \le j \le n,~1 \le i \le m)$ are positive real numbers such that $1\ge\sum_{i=1}^ma_{ij}^{1/p}~(j=1,\cdots n)$. Set $A_j=\left[\begin{array}{cc}\sum_{i=1}^ma_{ij}^{1/p}&0\\0&1\end{array}\right]\in \mathcal{M}_2(\mathbb{C})~(j=1,\cdots, n)$. Then
\begin{align*}
\sum_{j=1}^n\omega_j(I_2-A_j)^{p}&=\sum_{j=1}^n\omega_j\left(\left[\begin{array}{cc}1&0\\0&1\end{array}\right]-
\left[\begin{array}{cc}\sum_{i=1}^ma_{ij}^{\frac{1}{p}}&0\\0&1\end{array}\right]\right)^{p}\\
&=\sum_{j=1}^n\omega_j\left[\begin{array}{cc}\left(1-\sum_{i=1}^ma_{ij}^{\frac{1}{p}}\right)^{p}&0\\0&0\end{array}\right]\\
&=\left[\begin{array}{cc}\sum_{j=1}^n\omega_j\left(1-\sum_{i=1}^ma_{ij}^{\frac{1}{p}}\right)^{p}&0\\0&0\end{array}\right]\,.
\end{align*}
and
\begin{align*}
\left(\sum_{j=1}^n\omega_j(I_2-A_j)\right)^{p}&=\left(\sum_{j=1}^n\omega_j\left(\left[\begin{array}{cc}1&0\\0&1\end{array}\right]-
\left[\begin{array}{cc}\sum_{i=1}^ma_{ij}^{\frac{1}{p}}&0\\0&1
\end{array}\right]\right)\right)^{p}\\
&=\left[\begin{array}{cc}\sum_{j=1}^n\omega_j(1-\sum_{i=1}^ma_{ij}^{\frac{1}{p}})&0\\0&0\end{array}\right]^{p}\\
&=\left[\begin{array}{cc}\left(1-\sum_{j=1}^n\sum_{i=1}^m\omega_ja_{ij}^{\frac{1}{p}}\right)^{p}&0\\0&0\end{array}\right]\,.
\end{align*}
It follows from \eqref{eq2} with the identity map $\Phi$ that
\begin{align*}
\delta I_2+\left[\begin{array}{cc}\sum_{j=1}^n\omega_j\left(1-\sum_{i=1}^ma_{ij}^{\frac{1}{p}}\right)^{p}&0\\0&0\end{array}\right]\geq
\left[\begin{array}{cc}\left(1-\sum_{j=1}^n\sum_{i=1}^m\omega_ja_{ij}^{\frac{1}{p}}\right)^{p}&0\\0&0\end{array}\right] \,,
\end{align*}
where $\delta=(1-p)p^{\frac{p}{1-p}}$ by taking $m=0$ and $M=1$ in \eqref{eq1}, which gives \eqref{eq3}.\\
\end{proof}
\begin{corollary}\label{cr2}
Let $\Phi\in \mathbf{P}_N[\mathcal{B}(\mathscr{H}),\mathcal{B}(\mathscr{K})]$, $0< mI_{\mathscr H} \le A_j \le MI_{\mathscr H}$ be positive operators and $0 \leq \omega_j \leq 1~(j=1,\cdots,n)$ such that $\sum_{j=1}^n\omega_j=1$. Then
$$
\log\left[\frac{1}{e}\left(\frac{M^m}{m^M}\right)^{\frac{1}{M-m}}L(m,M)\right]+\Phi\left(\sum_{j=1}^n\omega_j\log A_j\right)\geq\log\left(\sum_{j=1}^n\omega_j\Phi(A_j)\right)$$
where $L(a,b)=\begin{cases}\frac{b-a}{\log b-\log a} & ;a\neq b\\ a & ; a=b\end{cases}$ is the Logarithmic mean of positive real numbers $a$ and $b$.
\end{corollary}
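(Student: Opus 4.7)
The plan is to specialize the Mond--Pe\v{c}ari\'c reverse Jensen inequality \eqref{JR} to the operator concave function $f(t)=\log t$ on $[m,M]\subset(0,\infty)$ with the common choice $\Phi_j=\Phi$ for all $j$. Under this specialization, linearity of $\Phi$ turns the left-hand sum into $\Phi\!\left(\sum_{j=1}^n\omega_j\log A_j\right)$ and, since $\sum_j\omega_j=1$, the right-hand side becomes $\log\!\left(\sum_{j=1}^n\omega_j\Phi(A_j)\right)$. So the whole task reduces to identifying the constant $\beta_f$ from \eqref{mpm2}/\eqref{JR} with the logarithmic expression appearing in the statement.

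The main computation is therefore the evaluation of
\[
\beta_f=\max_{m\le t\le M}\bigl\{\log t-\mu_f t-\nu_f\bigr\},
\qquad
\mu_f=\frac{\log M-\log m}{M-m},
\qquad
\nu_f=\frac{M\log m-m\log M}{M-m}.
\]
Differentiating gives the critical point $t_0=1/\mu_f=(M-m)/(\log M-\log m)=L(m,M)$, which lies in $[m,M]$ by the standard bracketing $m\le L(m,M)\le M$ of the logarithmic mean. Substituting yields $\beta_f=\log L(m,M)-1-\nu_f$. I would then rewrite $-\nu_f=(m\log M-M\log m)/(M-m)=\log\bigl((M^m/m^M)^{1/(M-m)}\bigr)$ and combine the three logarithms (using $-1=\log(1/e)$) to obtain
\[
\beta_f=\log\!\left[\tfrac{1}{e}\Bigl(\tfrac{M^{m}}{m^{M}}\Bigr)^{\!\frac{1}{M-m}}L(m,M)\right],
\]
which is exactly the constant appearing in the target inequality.

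With $\beta_f$ thus identified, I would finish by invoking \eqref{JR} with $f(t)=\log t$ and $\Phi_j=\Phi$: the term $\beta_f I_{\mathscr K}$ gives the logarithmic constant, $\sum_j\omega_j\Phi(f(A_j))=\Phi\!\bigl(\sum_j\omega_j\log A_j\bigr)$ by linearity, and $f\!\bigl(\sum_j\omega_j\Phi(A_j)\bigr)=\log\!\bigl(\sum_j\omega_j\Phi(A_j)\bigr)$, so the three pieces assemble into the claimed inequality.

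The only nontrivial step is the algebraic simplification of $\beta_f$; there is no operator-theoretic obstacle, since \eqref{JR} is already proved earlier in the excerpt and $\log$ is operator concave on $(0,\infty)$. A minor point to be careful about is verifying that $t_0=L(m,M)$ genuinely lies in $[m,M]$, so that the unconstrained critical value actually realises the maximum over the interval; this follows from the classical inequality $m\le L(m,M)\le M$, which I would either cite or note in one line.
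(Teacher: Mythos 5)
Your proposal is correct and follows exactly the paper's route: the paper's entire proof is ``Put $f(t)=\log t$ and $\Phi_j=\Phi$ in \eqref{JR}.'' Your additional work --- computing the critical point $t_0=1/\mu_f=L(m,M)$, checking $m\le L(m,M)\le M$, and simplifying $\beta_f=\log L(m,M)-1-\nu_f$ to the stated logarithmic constant --- simply makes explicit the evaluation of $\beta_f$ that the paper leaves to the reader, and it checks out.
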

\begin{proof}
Put $f(t)=\log t$ and $\Phi_j=\Phi$ in \eqref{JR}.
\end{proof}
\section{Some refinements of the Bellman operator inequality}
In this section, we present some refinements of the operator Bellman  inequality by using some ideas of \cite{pop}. First we need the following Lemmas.
\begin{lemma}
Let $A,B, A_j,B_j,\,\,(1\leq j\leq n)$ be positive operators such that $\sum_{j=1}^nA_j\leq A$, $\sum_{j=1}^nB_j\leq B$ and let $\sigma_f$ be an operator mean with the representing function $f$. Then
\begin{align}\label{mor243}
\left(A-\sum_{j=1}^nA_j\right)\sigma_{f}\left(B-\sum_{j=1}^nB_j\right)
\leq(A\sigma_f B)-\sum_{j=1}^n\left(A_j\sigma_fB_j\right).
\end{align}
\end{lemma}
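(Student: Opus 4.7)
The key observation is that this lemma is really an instance of the superadditivity of an operator mean, i.e.\ the inequality
$$ (X_1 + X_2)\,\sigma_f\,(Y_1 + Y_2) \;\ge\; X_1\,\sigma_f\,Y_1 + X_2\,\sigma_f\,Y_2 \qquad (X_i,Y_i\in\mathbb{B}(\mathscr{H})_+). $$
Granting this for a moment, I would introduce the positive ``slack'' operators $A_0 := A - \sum_{j=1}^n A_j$ and $B_0 := B - \sum_{j=1}^n B_j$, which are nonnegative by hypothesis. Iterating the superadditivity inequality $n$ times to combine the $n+1$ pairs $(A_0,B_0),(A_1,B_1),\dots,(A_n,B_n)$ gives
\begin{align*}
A\,\sigma_f\,B
&= \Bigl(A_0 + \sum_{j=1}^n A_j\Bigr)\,\sigma_f\,\Bigl(B_0 + \sum_{j=1}^n B_j\Bigr)\\
&\ge A_0\,\sigma_f\,B_0 + \sum_{j=1}^n A_j\,\sigma_f\,B_j,
\end{align*}
and rearranging yields exactly \eqref{mor243} (the extra term $A_0\sigma_f B_0$ on the right becomes the left-hand side of \eqref{mor243}).

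Thus everything reduces to proving superadditivity. For this I would combine the joint concavity of $\sigma_f$ (a standard consequence of the Kubo--Ando axioms (i)--(iii)) with the positive homogeneity $(tA)\,\sigma_f\,(tB)=t(A\,\sigma_f\,B)$ for $t\ge 0$, which is immediate from the representing-function formula $A\sigma_f B = A^{1/2}f(A^{-1/2}BA^{-1/2})A^{1/2}$ (extended to non-invertible $A$ by axiom (ii) through the perturbation $A\to A+\varepsilon I$). Concretely, applying joint concavity at $\lambda=\tfrac{1}{2}$ and then positive homogeneity at $t=\tfrac{1}{2}$,
$$ \tfrac{1}{2}(X_1\,\sigma_f\,Y_1) + \tfrac{1}{2}(X_2\,\sigma_f\,Y_2) \;\le\; \bigl(\tfrac{1}{2}X_1+\tfrac{1}{2}X_2\bigr)\,\sigma_f\,\bigl(\tfrac{1}{2}Y_1+\tfrac{1}{2}Y_2\bigr) \;=\; \tfrac{1}{2}\bigl((X_1+X_2)\,\sigma_f\,(Y_1+Y_2)\bigr), $$
and multiplying by $2$ gives the desired superadditivity.

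The only real obstacle is justifying that superadditivity step; once it is in hand the rest is bookkeeping. Notably, none of the Mond--Pe\v{c}ari\'c machinery, spectral bounds, or invertibility assumptions used in Section~2 are needed here --- only the domination hypotheses $\sum_j A_j\le A$ and $\sum_j B_j\le B$ enter, and they enter solely to guarantee that the slack operators $A_0,B_0$ are positive so that the arguments of $\sigma_f$ remain in $\mathbb{B}(\mathscr{H})_+$.
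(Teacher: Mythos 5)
Your proof is correct and takes essentially the same route as the paper: the paper likewise forms the slack pair $X_{n+1}=A-\sum_{j=1}^{n}A_j$, $Y_{n+1}=B-\sum_{j=1}^{n}B_j$, applies the superadditivity of operator means (quoted there from \cite[Theorem 5.7]{abc}) to the $n+1$ pairs, and rearranges. The only difference is that the paper invokes superadditivity as a known result, whereas you re-derive it from joint concavity plus positive homogeneity, which is a correct standard argument and changes nothing essential.
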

\begin{proof}
The subadditivity of operator mean says that \cite[Theorem 5.7]{abc}
\begin{align}\label{f-h23}
\sum_{j=1}^{n+1}\left(X_j\sigma_fY_j\right)\leq
\left(\sum_{j=1}^{n+1}X_j\right)\sigma_{f}\left(\sum_{j=1}^{n+1}Y_j\right),
\end{align}
where $X_j,Y_j,\,\,(1\leq j\leq n+1)$ are positive operators. If we put $X_j=A_j, Y_j=B_j\,\,(1\leq j\leq n)$, $X_{n+1}=A-\sum_{j=1}^nA_j$ and $Y_{n+1}=A-\sum_{j=1}^nB_j$ in inequality \eqref{f-h23}, then we reach
\begin{align*}
\sum_{j=1}^{n}\left(A_j\sigma_fB_j\right)+\left(A-\sum_{j=1}^nA_j\right)\sigma_f\left(B-\sum_{j=1}^nB_j\right)
\leq A\sigma_{f}B.
\end{align*}
Therefore
\begin{align*}
\left(A-\sum_{j=1}^nA_j\right)\sigma_f\left(B-\sum_{j=1}^nB_j\right)
\leq \left(A\sigma_{f}B\right)-\sum_{j=1}^{n}\left(A_j\sigma_fB_j\right).
\end{align*}
\end{proof}
\begin{lemma}\label{morasaee33}\cite[Lemma 2.1]{MMM1}
Let $A,B\in\mathbb{B}(\mathscr{H})$ be positive operators such that $A$ is contraction, $h$ is a nonnegative
operator monotone function on $[0,+\infty)$ and $\sigma_f$ be an operator mean with the representing function $f$. Then
\begin{align*}
A\sigma_{hof}B\leq h(A\sigma_{f}B).
\end{align*}
\end{lemma}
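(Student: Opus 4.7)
The plan is to reduce the inequality, via the Kubo--Ando definition of an operator mean, to a well-known inequality of Hansen for operator monotone functions and contractions.

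First, I would use the fact (recalled in the introduction) that for $A\in\mathbb{B}(\mathscr{H})_+$ and a representing function $g$ one has $A\sigma_g B=A^{1/2}g(A^{-1/2}BA^{-1/2})A^{1/2}$. Setting $Y:=A^{-1/2}BA^{-1/2}\ge 0$ and $X:=f(Y)\ge 0$ (positivity of $X$ follows since $f$ is the representing function of an operator mean, hence nonnegative on $[0,\infty)$), the left-hand side becomes
\begin{align*}
A\sigma_{h\circ f}B = A^{1/2}(h\circ f)(Y)A^{1/2} = A^{1/2} h(X) A^{1/2},
\end{align*}
while the right-hand side is
\begin{align*}
h(A\sigma_f B) = h\bigl(A^{1/2} X A^{1/2}\bigr).
\end{align*}
Thus it suffices to prove $A^{1/2}h(X)A^{1/2}\le h(A^{1/2}XA^{1/2})$.

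Next I would invoke Hansen's inequality: if $h$ is operator monotone on $[0,\infty)$ with $h(0)\ge 0$, and $C\in\mathbb{B}(\mathscr{H})$ is a contraction, then $C^{*}h(X)C\le h(C^{*}XC)$ for every $X\ge 0$. The hypothesis that $h$ is nonnegative operator monotone on $[0,\infty)$ gives $h(0)\ge 0$, and since $A$ is a contraction with $0\le A\le I_{\mathscr H}$, the positive square root $A^{1/2}$ is also a contraction. Applying Hansen's inequality with $C=A^{1/2}$ and the positive operator $X$ above yields exactly
\begin{align*}
A^{1/2} h(X) A^{1/2} \le h\bigl(A^{1/2} X A^{1/2}\bigr),
\end{align*}
which is the desired conclusion.

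I do not expect any serious obstacle here: the only subtle points are recognizing that the composition $h\circ f$ is again a nonnegative operator monotone function (so that $A\sigma_{h\circ f}B$ makes sense via the Kubo--Ando correspondence), and ensuring that $A^{1/2}$ qualifies as the contraction required by Hansen's inequality. If one wants to avoid the invertibility issue implicit in $A^{-1/2}$, one can first assume $A$ is strictly positive and then pass to a limit using joint continuity of operator means in the strong operator topology (condition (ii) of a connection), but this is a standard maneuver and does not affect the structure of the argument.
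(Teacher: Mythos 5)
Your proof is correct: the paper itself gives no proof of this lemma, quoting it from \cite[Lemma 2.1]{MMM1}, and your reduction via the Kubo--Ando representation $A\sigma_g B=A^{1/2}g(A^{-1/2}BA^{-1/2})A^{1/2}$ to Hansen's inequality $C^{*}h(X)C\le h(C^{*}XC)$ with the contraction $C=A^{1/2}$ is exactly the argument of that cited source. The side points you flag --- that $h\circ f$ is again nonnegative operator monotone so the connection $\sigma_{h\circ f}$ exists, that nonnegativity of $h$ gives $h(0)\ge 0$ as Hansen's inequality requires, and the strictly-positive approximation (e.g.\ replacing $A$ by $(1-\varepsilon)A+\varepsilon I_{\mathscr H}$, which remains a contraction) followed by the limit via property (ii) of connections --- are all handled correctly and leave no gap.
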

In the next theorem, we show a refinement of \eqref{mor2}.
\begin{theorem}\label{54398}
Let $A_j,B_j,\,\,(1\leq j\leq n)$ be positive operators such that $\sum_{j=1}^nA_j\leq I_{\mathscr H}$, $\sum_{j=1}^nB_j\leq I_{\mathscr H}$, $\sigma_f$ be an operator mean with the representing function $f$ and $p\in[0,1]$. Then
{\footnotesize\begin{align*}
\left(I_{\mathscr H}-\sum_{j=1}^nA_j\right)\sigma_{f^p}\left(I_{\mathscr H}-\sum_{j=1}^nB_j\right)
&\leq\left(\left(I_{\mathscr H}-\sum_{j=1}^kA_j\right)\sigma_f \left(I_{\mathscr H}-\sum_{j=1}^kB_j\right)-\sum_{j=k+1}^n\left(A_j\sigma_fB_j\right)\right)^{p}
\\&\leq\left(I_{\mathscr H}-\sum_{j=1}^n\left(A_j\sigma_fB_j\right)\right)^{p},
\end{align*}}
in which $k=1,2,\cdots,n-1$.
\end{theorem}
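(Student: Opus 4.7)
The plan is to derive both inequalities from the subadditivity relation \eqref{mor243}, Lemma \ref{morasaee33}, and the L\"owner--Heinz inequality, treating the two bounds sequentially.

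For the first inequality, I would apply \eqref{mor243} with the ``envelope'' operators $A:=I_{\mathscr H}-\sum_{j=1}^{k}A_j$ and $B:=I_{\mathscr H}-\sum_{j=1}^{k}B_j$ together with the tail families $\{A_{k+1},\dots,A_n\}$ and $\{B_{k+1},\dots,B_n\}$. The hypotheses $\sum_{j=1}^{n}A_j\le I_{\mathscr H}$ and $\sum_{j=1}^{n}B_j\le I_{\mathscr H}$ ensure $\sum_{j=k+1}^{n}A_j\le A$ and $\sum_{j=k+1}^{n}B_j\le B$, so \eqref{mor243} gives
\begin{equation*}
\left(I_{\mathscr H}-\sum_{j=1}^{n}A_j\right)\sigma_f\left(I_{\mathscr H}-\sum_{j=1}^{n}B_j\right)\le\left(I_{\mathscr H}-\sum_{j=1}^{k}A_j\right)\sigma_f\left(I_{\mathscr H}-\sum_{j=1}^{k}B_j\right)-\sum_{j=k+1}^{n}A_j\sigma_f B_j.
\end{equation*}
Both sides are positive (the left-hand side as a $\sigma_f$-mean of positive operators, the right-hand side by domination), so L\"owner--Heinz lets me raise to power $p\in[0,1]$. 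Then Lemma \ref{morasaee33} with $h(t)=t^p$ (nonnegative operator monotone on $[0,+\infty)$ for $p\in[0,1]$) applied to the contraction $I_{\mathscr H}-\sum_{j=1}^{n}A_j$ gives
\begin{equation*}
\left(I_{\mathscr H}-\sum_{j=1}^{n}A_j\right)\sigma_{f^p}\left(I_{\mathscr H}-\sum_{j=1}^{n}B_j\right)\le\left(\left(I_{\mathscr H}-\sum_{j=1}^{n}A_j\right)\sigma_f\left(I_{\mathscr H}-\sum_{j=1}^{n}B_j\right)\right)^{p}.
\end{equation*}
Chaining the last two displays yields the first inequality of the theorem.

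For the second inequality, apply \eqref{mor243} once more, this time with $A=B=I_{\mathscr H}$ and with only the first $k$ indices; using the normalization $I_{\mathscr H}\sigma_f I_{\mathscr H}=I_{\mathscr H}$ of a mean, this produces
\begin{equation*}
\left(I_{\mathscr H}-\sum_{j=1}^{k}A_j\right)\sigma_f\left(I_{\mathscr H}-\sum_{j=1}^{k}B_j\right)\le I_{\mathscr H}-\sum_{j=1}^{k}A_j\sigma_f B_j.
\end{equation*}
Subtracting $\sum_{j=k+1}^{n}A_j\sigma_f B_j$ from both sides and then raising to the $p$-th power via L\"owner--Heinz yields the desired second inequality.

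The main (rather mild) obstacle will be verifying the positivity prerequisites for each use of L\"owner--Heinz. In every case this follows automatically: each $\sigma_f$-mean above is formed from positive arguments, and the ``corrected'' middle expression $(I_{\mathscr H}-\sum_{j=1}^{k}A_j)\sigma_f(I_{\mathscr H}-\sum_{j=1}^{k}B_j)-\sum_{j=k+1}^{n}A_j\sigma_f B_j$ inherits positivity from the first application of \eqref{mor243}, since it dominates the positive operator $(I_{\mathscr H}-\sum_{j=1}^{n}A_j)\sigma_f(I_{\mathscr H}-\sum_{j=1}^{n}B_j)$.
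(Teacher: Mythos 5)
Your proof is correct and takes essentially the same route as the paper's: both split the sums at index $k$, apply \eqref{mor243} twice (once with the envelope operators $I_{\mathscr H}-\sum_{j=1}^{k}A_j$, $I_{\mathscr H}-\sum_{j=1}^{k}B_j$, once with $A=B=I_{\mathscr H}$ and the normalization $I_{\mathscr H}\sigma_f I_{\mathscr H}=I_{\mathscr H}$), then finish with Lemma \ref{morasaee33} for $h(t)=t^p$ and the L\"owner--Heinz inequality. Your explicit positivity checks are sound, and your version even corrects a small typographical slip in the paper, whose final display writes the last step as an equality where it should be the inequality $\le$.
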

\begin{proof}
For $k=1,2,\cdots,n-1$ we have
{\footnotesize\begin{align}\label{hohoh}
\left(I_{\mathscr H}-\sum_{j=1}^nA_j\right)&\sigma_{f}\left(I_{\mathscr H}-\sum_{j=1}^nB_j\right)\nonumber
\\&=\left(\Big(I_{\mathscr H}-\sum_{j=1}^kA_j\Big)-\sum_{j=k+1}^nA_j\right)\sigma_{f}\left(\Big(I_{\mathscr H}-\sum_{j=1}^kB_j\Big)-\sum_{j=k+1}^nB_j\right)\nonumber
\\&\leq\left(I_{\mathscr H}-\sum_{j=1}^kA_j\right)\sigma_{f}\left(I_{\mathscr H}-\sum_{j=1}^kB_j\right)-\sum_{j=k+1}^n(A_j\sigma_{f}B_j)\,\,(\textrm{by \eqref{mor243}})\nonumber
\\&\leq\left(\left(I_{\mathscr H}\sigma_f I_{\mathscr H}\right)- \sum_{j=1}^k(A_j\sigma_fB_j)\right)-\sum_{j=k+1}^n\left(A_j\sigma_fB_j\right)\quad(\textrm{by \eqref{mor243}})\nonumber
\\&=I_{\mathscr H}-\sum_{j=1}^n\left(A_j\sigma_fB_j\right),
\end{align}}
whence for the spacial case  $g(t)=t^{p}\,\,(p\in[0,1])$  we have
\begin{align*}
\left(I_{\mathscr H}-\sum_{j=1}^nA_j\right)&\sigma_{f^p}\left(I_{\mathscr H}-\sum_{j=1}^nB_j\right)
\\&\leq\left(\left(I_{\mathscr H}-\sum_{j=1}^nA_j\right)\sigma_{f}\left(I_{\mathscr H}-\sum_{j=1}^nB_j\right)\right)^{p}(\textrm{by Lemma \ref{morasaee33}})
\\&\leq\left(\left(I_{\mathscr H}-\sum_{j=1}^kA_j\right)\sigma_{f}\left(I_{\mathscr H}-\sum_{j=1}^kB_j\right)-\sum_{j=k+1}^n(A_j\sigma_{f}B_j)\right)^{p}
\\&=\left(I_{\mathscr H}-\sum_{j=1}^n\left(A_j\sigma_fB_j\right)\right)^{p}.
\end{align*}
\end{proof}
Using the same idea as in the proof of Theorem \ref{54398} we improve  inequality \eqref{mor2} in the next theorem.
\begin{theorem}
Let $A_j,B_j,\,\,(1\leq j\leq n)$ be positive operators such that $\sum_{j=1}^nA_j\leq I_{\mathscr H}$, $\sum_{j=1}^nB_j\leq I_{\mathscr H}$, $\sigma_f$  be an operator mean with the representing function $f$ and $p\in[0,1]$. Then
{\footnotesize\begin{align*}
\left(I_{\mathscr H}-\sum_{j=1}^nA_j\right)&\sigma_{f^p}\left(I_{\mathscr H}-\sum_{j=1}^nB_j\right)
\\&\leq\left(\left(I_{\mathscr H}-\sum_{j=1}^nt_jA_j\right)\sigma_f \left(I_{\mathscr H}-\sum_{j=1}^nt_jB_j\right)-\sum_{j=1}^n(1-t_j)\left(A_j\sigma_fB_j\right)\right)^{p}
\\&\leq\left(I_{\mathscr H}-\sum_{j=1}^n\left(A_j\sigma_fB_j\right)\right)^{p},
\end{align*}}
where  $t_j\in[0,1]\,\,(j=1,\cdots,n)$.
\end{theorem}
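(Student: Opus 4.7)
The proof will closely parallel that of Theorem~\ref{54398}, but instead of a hard cut-off at some index $k$, the splitting is done weight-by-weight using the parameters $t_j$. The basic idea is to decompose $A_j = t_j A_j + (1-t_j)A_j$ (and similarly for $B_j$), so that $I_\mathscr{H}-\sum_j A_j = \bigl(I_\mathscr{H}-\sum_j t_j A_j\bigr) - \sum_j (1-t_j)A_j$ realises the LHS as a difference $C - \sum C_j$ eligible for \eqref{mor243}. The two claimed inequalities will then follow from two applications of \eqref{mor243}, combined at the end with Lemma~\ref{morasaee33} and L\"owner--Heinz to pass from $\sigma_f$ to $\sigma_{f^p}$.

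First I would verify the hypotheses for \eqref{mor243}. With $A=I_\mathscr{H}-\sum_j t_j A_j$ and $A_j'=(1-t_j)A_j$, we have $\sum_j A_j' = \sum_j A_j - \sum_j t_j A_j \le I_\mathscr{H}-\sum_j t_j A_j = A$ because $\sum_j A_j\le I_\mathscr{H}$; the analogous bound holds for the $B$'s. Now apply \eqref{mor243}:
\begin{align*}
\Bigl(I_\mathscr{H}-\sum_j A_j\Bigr)\sigma_f\Bigl(I_\mathscr{H}-\sum_j B_j\Bigr)
&\le \Bigl(I_\mathscr{H}-\sum_j t_j A_j\Bigr)\sigma_f\Bigl(I_\mathscr{H}-\sum_j t_j B_j\Bigr)\\
&\qquad - \sum_j\bigl((1-t_j)A_j\sigma_f(1-t_j)B_j\bigr).
\end{align*}
Here I invoke the positive homogeneity of any Kubo--Ando connection, $(\lambda X)\sigma_f(\lambda Y)=\lambda(X\sigma_f Y)$ for $\lambda\ge 0$ (an immediate consequence of the transformer equality for $T=\sqrt{\lambda}\,I_\mathscr{H}$), to rewrite the subtracted term as $\sum_j(1-t_j)(A_j\sigma_f B_j)$. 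This produces the middle quantity appearing in the theorem, before the $p$-th power.

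Next I would prove the second inequality at the level of $\sigma_f$, namely
\begin{align*}
\Bigl(I_\mathscr{H}-\sum_j t_j A_j\Bigr)\sigma_f\Bigl(I_\mathscr{H}-\sum_j t_j B_j\Bigr)
\le I_\mathscr{H}-\sum_j t_j(A_j\sigma_f B_j).
\end{align*}
This is a second application of \eqref{mor243}, this time with $A=B=I_\mathscr{H}$, $A_j'=t_j A_j$, $B_j'=t_j B_j$ (the hypotheses $\sum_j t_j A_j\le I_\mathscr{H}$ and $\sum_j t_j B_j\le I_\mathscr{H}$ are immediate from $t_j\in[0,1]$), again using positive homogeneity to replace $t_j A_j \sigma_f t_j B_j$ by $t_j(A_j\sigma_f B_j)$. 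Subtracting $\sum_j(1-t_j)(A_j\sigma_f B_j)$ from both sides converts this into exactly
\[
\Bigl(I_\mathscr{H}-\sum_j t_j A_j\Bigr)\sigma_f\Bigl(I_\mathscr{H}-\sum_j t_j B_j\Bigr)-\sum_j(1-t_j)(A_j\sigma_f B_j)\le I_\mathscr{H}-\sum_j(A_j\sigma_f B_j),
\]
which is the desired inner comparison.

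Finally, Lemma~\ref{morasaee33} applied to the contraction $I_\mathscr{H}-\sum_j A_j$ with the operator monotone function $h(t)=t^p$ yields $\bigl(I_\mathscr{H}-\sum_j A_j\bigr)\sigma_{f^p}\bigl(I_\mathscr{H}-\sum_j B_j\bigr)\le \Bigl[\bigl(I_\mathscr{H}-\sum_j A_j\bigr)\sigma_{f}\bigl(I_\mathscr{H}-\sum_j B_j\bigr)\Bigr]^p$, and raising the chain of $\sigma_f$-inequalities obtained above to the $p$-th power via L\"owner--Heinz completes the proof. I do not expect a serious obstacle here; the only subtle point is checking the hypotheses of \eqref{mor243} at each application and being explicit about positive homogeneity, which must be invoked twice.
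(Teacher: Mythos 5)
Your proposal is correct and follows essentially the same route as the paper's own proof: the same splitting $A_j=t_jA_j+(1-t_j)A_j$, two applications of \eqref{mor243} combined with the positive homogeneity of Kubo--Ando means (the paper's ``property (iii)''), and the final passage to $\sigma_{f^p}$ via Lemma \ref{morasaee33} and the L\"owner--Heinz inequality. Your explicit verification of the hypotheses of \eqref{mor243} at each application is a welcome touch the paper leaves implicit, but the argument is the same.
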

\begin{proof}
Let  $t_j\in[0,1]\,\,(j=1,\cdots,n)$. It follows from  $I_{\mathscr H}-\sum_{j=1}^nt_jA_j\geq\sum_{j=1}^n(1-t_j)A_j$, $I_{\mathscr H}-\sum_{j=1}^nt_jB_j\geq\sum_{j=1}^n(1-t_j)B_j$ and inequality \eqref{mor243} that
\begin{align}\label{user}
&\left(I_{\mathscr H}-\sum_{j=1}^nA_j\right)\sigma_{f}\left(I_{\mathscr H}-\sum_{j=1}^nB_j\right)\nonumber
\\&=\left(\Big(I_{\mathscr H}-\sum_{j=1}^nt_jA_j\Big)-\sum_{j=1}^n(1-t_j)A_j\right)\sigma_{f}\left(\Big(I_{\mathscr H}-\sum_{j=1}^nt_jB_j\Big)-\sum_{j=1}^n(1-t_j)B_j\right)\nonumber
\\&\leq\left(\Big(I_{\mathscr H}-\sum_{j=1}^nt_jA_j\Big)\sigma_f \Big(I_{\mathscr H}-\sum_{j=1}^nt_jB_j\Big)-\sum_{j=1}^n\left((1-t_j)A_j\sigma_f(1-t_j)B_j\right)\right)~{\footnotesize(\textrm{by \eqref{mor243}})}\nonumber
\\&=\left(\Big(I_{\mathscr H}-\sum_{j=1}^nt_jA_j\Big)\sigma_f \Big(I_{\mathscr H}-\sum_{j=1}^nt_jB_j\Big)-\sum_{j=1}^n(1-t_j)\left(A_j\sigma_fB_j\right)\right)\nonumber\\&\qquad\qquad(\textrm{by property (iii) of operator means})\nonumber
\\&\leq\left((I_{\mathscr H}\sigma_fI_{\mathscr H})-\sum_{j=1}^n(t_jA_j\sigma_ft_jB_j)-\sum_{j=1}^n(1-t_j)\left(A_j\sigma_fB_j\right)\right)\,\,(\textrm{by \eqref{mor243}})\nonumber
\\&=\left(I_{\mathscr H}-\sum_{j=1}^nt_j(A_j\sigma_fB_j)-\sum_{j=1}^n(1-t_j)\left(A_j\sigma_fB_j\right)\right)\nonumber
\\&\qquad\qquad(\textrm{by property (iii) of operator means})\nonumber
\\&=\left(I_{\mathscr H}-\sum_{j=1}^n\left(A_j\sigma_fB_j\right)\right).
\end{align}
Using Lemma \ref{morasaee33}, the operator monotone function $g(t)=t^{p}\,\,(p\in[0,1])$ and inequality \eqref{user} we get the desired result.
\end{proof}

\bigskip
\textbf{Acknowledgement.} The authors would like to sincerely thank the anonymous  referee for some useful comments and suggestions. The first author would like to thank the Tusi Mathematical Research Group (TMRG).
\bigskip
\bigskip
\bibliographystyle{amsplain}

\end{document}